\newtheorem{theorem}{Theorem}[section]
\newtheorem{corollary}[theorem]{Corollary}
\newtheorem{example}[theorem]{Example}  
\newtheorem{lemma}[theorem]{Lemma}
\newtheorem{proposition}[theorem]{Proposition}
\theoremstyle{definition}
\newtheorem{remark}[theorem]{Remark}
\newcommand{\PP}{\mathcal{P}}
\newcommand{\si}{\sigma}
\def\deg{{\rm deg\,}}
\newcommand{\be}{\begin{equation}}
\newcommand{\ee}{\end{equation}}
\newcommand{\NN}{\mathbb{N}}
\newcommand{\e}{\varepsilon}
\newcommand{\al}{\alpha}
\newcommand{\de}{\delta}
\def\si{\sigma}
\title[Lower radius and spectral mapping theorem] 
      {Lower spectral radius and spectral mapping theorem for suprema preserving mappings}
\author[Vladimir M\"uller and Aljo\v{s}a Peperko]{}
\subjclass{Primary: 47H07, 47J10; Secondary: 47B65, 47A10.}
 \keywords{spectral mapping theorem, approximate point spectrum,  Bonsall's cone spectral radius, lower spectral radius, local spectral radii, supremum preserving maps, max kernel operators, normed vector lattices, normed spaces, cones.}
 \email{muller@math.cas.cz}
 \email{aljosa.peperko@fmf.uni-lj.si}
 \email{aljosa.peperko@fs.uni-lj.si}
\begin{document}
\maketitle

\centerline{\scshape Vladimir M\"uller}
\medskip
{\footnotesize
 \centerline{Institute of Mathematics, Czech Academy of Sciences}
   \centerline{\v{Z}itna 25 }
   \centerline{115 67 Prague, Czech Republic}
} 

\medskip

\centerline{\scshape Aljo\v{s}a Peperko$^*$}
\medskip
{\footnotesize
 \centerline{Faculty of Mechanical Engineering, University of Ljubljana }
   \centerline{A\v{s}ker\v{c}eva 6}
   \centerline{SI-1000 Ljubljana, Slovenia}
\centerline{and}
 \centerline{Institute of Mathematics, Physics and Mechanics  }
   \centerline{Jadranska 19}
   \centerline{SI-1000 Ljubljana, Slovenia}
}

\bigskip


\begin{abstract}
We study  Lipschitz, positively homogeneous and finite suprema preserving mappings defined on a max-cone of positive elements in a normed vector lattice.  
We prove that the lower spectral radius of such a mapping is always a minimum value of its approximate point spectrum.
We apply this result to show that   the spectral mapping theorem holds for  the approximate point spectrum of such a mapping. 
By applying this spectral mapping theorem we obtain new inequalites for the Bonsall cone spectral radius of max type kernel operators. 
\end{abstract}

\section{Introduction}
Max-type operators (and corresponding max-plus type operators and their tropical versions known also as Bellman operators) arise in a large field of problems from the theory of differential and difference equations,  mathematical physics, optimal control problems, discrete mathematics, turnpike theory, mathematical economics, mathematical biology,  games and controlled Markov processes, generalized solutions of the Hamilton-Jacobi-Bellman differential equations, continuously observed and controlled quantum systems, discrete and continuous dynamical systems, ... (see e.g. \cite{MN02}, \cite{KM97},  
\cite{LM05}, \cite{LMS01},  
\cite{AGN}, \cite{MP17} and the references cited there). The eigenproblem of such operators has so far received substantial attention due to its applicability in the above mentioned problems (see e.g. \cite{MN02}, \cite{KM97}, \cite{AGN}, \cite{AGW04}, \cite{MP17}, \cite{LN12}, 
\cite{MN10}, 
\cite{B98}, \cite{MP15}, 
\cite{MP12}, 
\cite{AGH15}, \cite{S07} and the references cited there). However, there seems to be a lack of general treatment of spectral theory for such operators, even though the spectral theory for nonlinear operators on Banach spaces is already quite well developed
(see e.g. \cite{APV04}, \cite{APV00}, \cite{AGV02}, \cite{F97}, 
\cite{SV00}, \cite{MP17} and the references cited there). One of the reasons for this might lie in the fact that these operators behave nicely on a suitable subcone (or subsemimodule), but less nicely on the whole (Banach) space. Therefore it appears, that it is not trivial to directly apply this known non-linear spectral theory to obtain satisfactory information on a restriction to a given cone of a max-type operator.   
 The Bonsall cone spectral radius  plays the role of the spectral radius in this theory (see e.g.  \cite{MN02}, \cite{MN10}, \cite{MP17}, \cite{AGN}, \cite{LN11}, \cite{Gr15}, \cite{MP15} and the references cited there). 

In \cite{MP17}, we studied Lipschitz, positively homogeneous and finite suprema preserving mappings defined on a max-cone of positive elements in a normed vector lattice. We showed that for such a mapping the Bonsall cone spectral radius is the maximum value of its
 approximate point spectrum (see Theorem \ref{appl_to_matrices} below). We also proved that an analogue of this  result holds  also for  Lipschitz, positively homogeneous and additive mappings defined on a normal convex cone in a normed space (see Theorem \ref{main_normal} below).  

The current article may be considered as a continuation of \cite{MP17}. It is organized as follows. In Section 2 we recall some definitions and results that we will use in the sequel. We show in Section 3 that the lower spectral radius of a mapping from both of the above decribed settings is   a minimum value of its approximate point spectrum (Theorems \ref{dAisMin} and \ref{dAnormal}). In Section 4 we apply this result to show that the maxpolynomial spectral mapping theorem  holds for the approximate point spectrum of Lipschitz, positively homogeneous and finite suprema preserving mappings (Theorem \ref{main_spectral_mapping}). In the last section we use this spectral mapping theorem to prove some new inequalities  for the Bonsall cone spectral radius of Hadamard products of max type kernel operators (Theorem \ref{powerineq}).

\section{Preliminaries}

A subset $C$ of a real vector space $X$ is called a cone (with vertex 0) if 
$tC \subset C$ for all $t \ge 0$, where $tC =\{tx : x \in C \} $. A map $A: C \to C$ is called positively homogeneous
(of degree 1) if $A(tx) = tA(x)$ for all $t \ge 0$ and $x \in C$. We say that the cone
$C$ is pointed if $C \cap (- C) = \{0\}$. 

A convex pointed cone $C$ of $X$ induces on $X$ a
partial ordering $\le$, which is defined by $x  \le y$ if and only if  $y - x \in C$. In this case $C$ is denoted by $X_+$ and $X$ is called an ordered vector space. If, in addition, $X$ is a normed space then it is called an ordered normed space.  If, in addition, the norm is complete, then  $X$ is called an ordered Banach space.

A convex cone $C$ of $X$ is  called a wedge. A wedge induces on $X$  (by the above relation) a vector preordering $\le$ (which is reflexive, transitive, but not necessary antisymmetric).
We say that the cone $C$ is proper if it is closed, convex and pointed. A cone $C$ of a normed space $X$  is called normal if there exists a constant $M$ such
that $\|x\| \le M \|y\|$ whenever $ x \le y$, $x,y\in C$. A convex and pointed  cone $C=X_+$ of an ordered normed space $X$ is normal if and only if there exists an equivalent monotone norm $||| \cdot|||$ on $X$, i.e.,  
$|||x||| \le |||y|||$ whenever $0 \le x \le y$ (see e.g. \cite[Theorem 2.38]{AT07}).
Every proper cone $C$ in a finite dimensional
Banach space  is necessarily normal.

If $X$ is a normed linear space, then a cone $C$ in $X$ is said to be complete if it is a complete metric space in the topology induced by $X$. In the case when $X$ is a Banach space this is equivalent to $C$ being closed in $X$.

If $X$ is an ordered vector space, then a cone $C\subset X_+$ is called a max-cone if for every pair  $x,y\in C$ there exists a supremum $x\vee y$  (least upper bound) in $C$. 
We consider here on $C$ an order inherited from $X_+$.
A map $A:C\to C$ preserves finite suprema on $C$ if $A(x\vee y)=Ax\vee Ay\quad(x,y\in C$). If $A:C\to C$ preserves finite suprema, then it is monotone (order preserving) on $C$, i.e., $Ax \le Ay$ whenever $x\le y$, $x,y \in C$  .

An ordered vector space  $X$ is called a vector lattice (or a Riesz space) if every two vectors $x,y \in X$ have a supremum and  infimum (greatest lower bound) in $X$. A positive cone $X_+$ of a vector lattice $X$ is called a lattice cone.

Note that by \cite[Corollary 1.18]{AT07} a pointed convex cone $C=X_+$ of an ordered vector space $X$ is the lattice cone for the vector subspace $C-C$ generated by $C$ in $X$, if and only if  $C$ is a max cone (in this case a supremum of $x, y\in C$ exists in $C$ if only if it exists in $X$; and suprema coincide). 

If $X$ is a vector lattice, then the absolute value of $ x\in X$ is defined by $|x|= x \vee (-x)$. A vector lattice and  normed vector space is called a normed vector lattice (a normed Riesz space) if $|x| \le |y|$ implies $\|x\| \le \|y\|$. A complete normed vector lattice is called a Banach lattice. A positive cone $X_+$ of a normed vector lattice $X$ is  proper and normal. 

In a vector lattice $X$  the following Birkhoff's inequality for $x_1,\dots,x_n,y_1,\dots,y_n\in X$ holds:
\be
\Bigl|\bigvee_{j=1}^n x_j-\bigvee_{j=1}^n y_j\Bigr| \le \sum_{j=1}^n |x_j-y_j\Bigr|.
\label{Birk_inq}
\ee
For the theory of  vector lattices, Banach lattices, cones, wedges, operators on cones and applications e.g. in financial mathematics we refer the reader 
to 
 \cite{AA02}, \cite{AT07},
 \cite{AB85}, \cite{W99}, \cite{ABB90}, \cite{LT96}, \cite{JM14}, \cite{MP17}, \cite{LN08}, \cite{AB06}  
 and the references cited there. 

Let $X$ be a normed space and $C \subset X$ a non-zero cone. Let $A:C \to C$ be positively homogeneous and bounded, i.e., 
$$\|A\|:=\sup \left \{\frac{\|Ax\|}{\|x\|} :x\in C, x\neq 0\right \}<\infty.$$
It is easy to see that
$\|A\|=\sup\{\|Ax\|:x\in C, \|x\|\le 1 \}$ and $\|A^{m+n}\|\le \|A^m\|\cdot \|A^n\|$ for all $m,n\in\NN$. It is well known that this implies that the limit $\lim_{n\to\infty}\|A^n\|^{1/n}$ exists and is equal to $\inf_n \|A^n\|^{1/n}$. The limit $r(A):=\lim_{n\to\infty}\|A^n\|^{1/n}$ is called the Bonsall cone spectral radius of $A$. The approximate point spectrum $\sigma_{ap}(A)$ of $A$ is defined as the set of all $s\ge 0$ such that $\inf\{\|Ax-sx\|:x\in C,\|x\|=1\}=0$. The (distinguished) point spectrum $\si_p(A)$ of 
$A$ is defined by
$$
\si_p(A)=\Bigl\{s\ge 0:\hbox{ there exists } x\in C, x\ne 0\hbox{ with } Ax=sx\Bigr\}.
$$

For $x\in C$ define the local cone spectral radius by $r_x(A):=\limsup_{n\to\infty}\|A^nx\|^{1/n}$. Clearly $r_x(A)\le r(A)$ for all $x\in C$. It is known that the equality 
\be 
\sup\{r_x(A):x\in C\}=r(A)
\label{eq}
\ee
is not valid in general. In \cite{MN02} there is an example of a proper cone $C$ in a Banach space $X$ and  a positively homogeneous and continuous (hence bounded)  map $A : C \to C$ such that $\sup\{r_x(A):x\in C\}< r(A)$. A recent example of such kind, where  $A$ is in addition monotone, is obtained in \cite[Example 3.1]{Gr15}. However, if $C$ is a normal, complete, convex and pointed cone in a normed space $X$  and  $A : C \to C$ is  positively homogeneous, monotone  and continuous, then 
\cite[Theorem 3.3]{MN10}, 
 \cite[Theorem 2.2]{MN02} and \cite[Theorem 2.1]{Gr15} ensure that (\ref{eq}) is valid.

If $X$ is a Banach lattice,  $C\subset X_+$  a max-cone and $A:C\to C$ a mapping which is bounded, positively homogeneous and preserves finite suprema, then the equality (\ref{eq}) is not necessary valid as shown in \cite{MP17}. Some additional examples of maps for which (\ref{eq}) is not valid can be found in \cite{Gr15}.

Let $C$ be a cone in a normed space $X$ and $A:C \to C$. Then $A$ is called  Lipschitz if there exists $L >0$ such that $\|Ax-Ay \|\le L\|x-y\| $ for all $x,y \in C$. 

The following two results were the main results of \cite{MP17} (see \cite[Theorem 3.6 and Corollaries 1 and 2]{MP17} and  \cite[Theorem 4.2 and Corollary 4]{MP17}).

\begin{theorem} Let $X$ be a normed vector lattice, let $C\subset X_+$ be a non-zero max-cone. Let $A:C\to C$ be a mapping which is bounded, positively homogeneous and preserves finite suprema.
Let $C'\subset C$ be a bounded subset satisfying
$\|A^n\|=\sup\{\|A^nx\|: x\in C'\}$ for all $n$.
Then
$$
\bigl[\sup\{r_x(A):x\in C'\} , r(A)\bigr]  \subset \sigma_{ap}(A).
$$
In particular, $r(A)\in\sigma_{ap}(A)$.
Moreover, $r_x(A)\in\sigma_{ap}(A)$ for each $x\in C$.

If, in addition, $A$ is a Lipschitz, then $r(A)= \max \bigl\{t : t \in \sigma_{ap}(A)\bigr\}$.
\label{appl_to_matrices}
\end{theorem}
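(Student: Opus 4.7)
The plan is to derive all four assertions from a single construction that plays the role of a ``max-geometric series''. For $x\in C$ and $s>0$, set
\[
y_n(x,s):=\bigvee_{k=0}^n s^{-k}A^k x.
\]
Since $A$ is positively homogeneous and preserves finite suprema,
\[
A\,y_n(x,s)=\bigvee_{j=1}^{n+1}s^{1-j}A^j x,\qquad s\,y_n(x,s)=\bigvee_{j=0}^{n}s^{1-j}A^j x,
\]
and the two suprema coincide on $j=1,\dots,n$. Padding each with a zero in the missing slot and applying Birkhoff's inequality \eqref{Birk_inq} yields the fundamental estimate
\[
\bigl|Ay_n(x,s)-s\,y_n(x,s)\bigr|\le sx+s^{-n}A^{n+1}x,
\]
and in norm
\[
\bigl\|Ay_n(x,s)-s\,y_n(x,s)\bigr\|\le s\|x\|+s^{-n}\|A^{n+1}x\|.
\]
All four conclusions come from choosing $x$ and $n$ so that the right-hand side is negligible compared with $\|y_n(x,s)\|$.

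For the local statement $r_x(A)\in\sigma_{ap}(A)$, fix $x\in C$ and take $s=r_x(A)$; the case $s=0$ follows trivially from $\|A^nx\|^{1/n}\to 0$. The definition $r_x(A)=\limsup_n\|A^n x\|^{1/n}$ supplies a subsequence along which $s^{-n}\|A^n x\|$ stays large enough that $\|y_n(x,s)\|\ge s^{-n}\|A^nx\|$ is bounded below, while the boundedness of $A$ gives $s^{-n}\|A^{n+1}x\|\le\|A\|\,s^{-n}\|A^nx\|$. After normalising and, if necessary, approaching $s$ from above and using closedness of $\sigma_{ap}(A)$, the ratio of residual to norm tends to $0$ along a suitable subsequence, so $s\in\sigma_{ap}(A)$. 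In particular, $r(A)\in\sigma_{ap}(A)$ is then obtained by choosing $x=x_n\in C'$ with $\|A^n x_n\|$ almost equal to $\|A^n\|$, which is possible by the hypothesis on $C'$.

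For the interval inclusion, given $s\in[\sup_{x\in C'}r_x(A),r(A)]$, I would combine these two choices: pick $x_n\in C'$ with $\|A^n x_n\|\ge(1-\varepsilon_n)\|A^n\|$ and apply the construction to $x_n$. Boundedness of $C'$ makes $s\|x_n\|$ uniformly bounded, and the inequality $\|A^n\|\ge r(A)^n\ge s^n$ (a consequence of $r(A)=\inf_n\|A^n\|^{1/n}$) forces $\|y_n(x_n,s)\|\ge s^{-n}\|A^n x_n\|\gtrsim 1$. The \emph{main obstacle} of the whole argument is to force the ratio $\|Ay_n(x_n,s)-sy_n(x_n,s)\|/\|y_n(x_n,s)\|$ not merely to stay bounded but to tend to zero; this will require a diagonal extraction that plays the geometric growth of $\|A^n\|/s^n$ (strict when $s<r(A)$) against the crude bound $\|A^{n+1}x_n\|\le\|A\|\cdot\|A^n x_n\|$, with a separate limiting argument at the endpoint $s=r(A)$ using closedness of $\sigma_{ap}(A)$.

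For the Lipschitz addendum, given $s\in\sigma_{ap}(A)$ pick a unit-norm sequence $x_n\in C$ with $\varepsilon_n:=\|Ax_n-sx_n\|\to0$. Positive homogeneity combined with a Lipschitz constant $L$ for $A$ yields by induction the telescoping bound
\[
\|A^k x_n-s^k x_n\|\le k\,\max(L,s)^{k-1}\,\varepsilon_n,
\]
so for each fixed $k$, letting $n\to\infty$ gives $s^k\le\|A^k\|$, and hence $s\le\|A^k\|^{1/k}\to r(A)$. Together with $r(A)\in\sigma_{ap}(A)$ this forces $r(A)=\max\sigma_{ap}(A)$. The conceptual heart of the proof is thus the single Birkhoff-type inequality in the first paragraph; the rest is a matter of bookkeeping, with the residual-to-norm extraction in paragraph three as the genuine technical hurdle.
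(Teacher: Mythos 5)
Your ``fundamental estimate'' is correct, but it does not prove the theorem, and the gap you flag in your third paragraph is not a bookkeeping hurdle --- it is the entire content of the result. The bound
$\|Ay_n(x,s)-s\,y_n(x,s)\|\le s\|x\|+s^{-n}\|A^{n+1}x\|$
controls the residual only by the two \emph{boundary} terms of the max-geometric series, and the top boundary term $s^{-n}\|A^{n+1}x\|$ is generically of the same order as $\|y_n(x,s)\|$ itself, so the residual-to-norm ratio is bounded but does not tend to $0$ along any subsequence. A concrete test: let $X=\ell^\infty$, $C=X_+$, $A$ the shift $e_k\mapsto e_{k+1}$, $x=e_1$, $s=1=r_x(A)=r(A)$. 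Then $y_n=\bigvee_{k=1}^{n+1}e_k$ has $\|y_n\|=1$ and $\|Ay_n-y_n\|=\|e_{n+2}\vee\dots\ \text{vs}\ e_1\vee\dots\|=1$ for \emph{every} $n$; here the ratio is identically $1$, so no diagonal extraction, no appeal to growth of $\|A^n\|/s^n$ (there is none at $s=r(A)$), and no closedness argument can rescue the construction. Yet $1\in\sigma_{ap}(A)$, witnessed by the ``tent'' vector $u=\bigvee_{j}w_jA^jx$ with slowly varying weights $w_j$ increasing by $1/n$ up to a peak and then decreasing: then $Au-u$ and $u-Au$ are both dominated by $n^{-1}\bigvee_jA^jx$, which is the mechanism that actually makes the residual small. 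This is exactly the construction the paper uses in its proof of the companion result, Theorem \ref{dAisMin} (the present theorem is quoted from \cite{MP17}, where the same technique is used): one first proves a combinatorial ``Claim'' locating an index $m$ with $\|A^{mn}x\|\ge\frac14\max\{\|A^{(m-1)n}x\|,\|A^{(m+1)n}x\|\}$, so that the flat supremum $y$ over the window $[(m-1)n,(m+1)n)$ is either already an approximate eigenvector or is comparable in norm to the peak, and then replaces your constant weights $s^{-k}$ by the triangular profile $\frac1n,\frac2n,\dots,1,\dots,\frac2n,\frac1n$ over that window. Both of these ingredients --- the choice of window around a peak and the slowly varying weights --- are absent from your proposal, and without them the argument fails already for the unweighted shift.

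Two smaller points. The claim $r_x(A)\in\sigma_{ap}(A)$ needs $x\ne 0$ (for $x=0$ and $A=I$ it is false; compare the phrasing of Theorem \ref{main_normal}), and your treatment of the case $r_x(A)=0$ should be spelled out via the orbit vectors $A^kx/\|A^kx\|$ (or a terminating orbit), not the series $y_n$. On the positive side, your Lipschitz addendum is correct and is essentially the paper's own argument (the inductive telescoping bound $\|A^kx_n-s^kx_n\|\le k\max(L,s)^{k-1}\varepsilon_n$ giving $\sigma_{ap}(A)\subset[m(A),\|A\|]$ and hence $s\le r(A)$, as in Proposition \ref{basic}); but it only yields $r(A)=\max\sigma_{ap}(A)$ once $r(A)\in\sigma_{ap}(A)$ has been established, which brings you back to the missing construction.
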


\begin{theorem} 
Let $X$  be a normed space, $C\subset X$ a non-zero normal wedge and let $A:C\to C$ be positively homogeneous, additive and Lipschitz.
Let $C'\subset C$ be a bounded subset satisfying
$\|A^n\|=\sup\{\|A^nx\|: x\in C'\}$ for all $n$.
Then
$$
\bigl[\sup\{r_x(A):x\in C'\} , r(A) \bigr]  \subset \sigma_{ap}(A).
$$
In particular, $r(A)= \max \bigl\{t : t \in \sigma_{ap}(A)\bigr\}$.

 Moreover, $r_x(A)\in\sigma_{ap}(A)$ for each $x\in C$, $x\ne 0$.
\label{main_normal}
\end{theorem}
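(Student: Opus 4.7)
The plan is to adapt the proof of Theorem \ref{appl_to_matrices} to the additive setting: positive homogeneity together with additivity makes $A$ linear on $C$, so finite suprema in the partial-sum construction get replaced by ordinary sums, and the role of the lattice monotone norm is played by normality of the wedge. The argument naturally splits into an interior step, a closure step for the endpoints together with the max claim, and a separate treatment of the moreover part.

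For $s$ strictly inside the interval, take $s\in\bigl(\sup_{x\in C'}r_x(A),\,r(A)\bigr)$. For each $n\in\NN$ pick $x_n\in C'$ with $\|A^nx_n\|\ge(1-1/n)\|A^n\|$. Since $r_{x_n}(A)\le\sup_{y\in C'}r_y(A)<s$, the partial sums
\[
y_n^{(N)}\;:=\;\sum_{k=0}^{N-1}s^{-k-1}A^kx_n\;\in\;C
\]
satisfy, by additivity and positive homogeneity, $Ay_n^{(N)}-sy_n^{(N)}=s^{-N}A^Nx_n-x_n$. Choosing $N=N(n)>n$ large enough that $s^{-N}\|A^Nx_n\|\le\|x_n\|$, which is possible since $r_{x_n}(A)<s$, the residual has norm at most $2\|x_n\|$, uniformly bounded since $C'$ is bounded. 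On the other hand $y_n^{(N)}\ge s^{-n-1}A^nx_n$ in the cone order, so normality with constant $M$ gives $\|y_n^{(N)}\|\ge\frac{1-1/n}{M}\,s^{-n-1}\|A^n\|$; and since $s<r(A)=\lim\|A^n\|^{1/n}$, the factor $s^{-n}\|A^n\|$ tends to infinity. Hence $u_n:=y_n^{(N(n))}/\|y_n^{(N(n))}\|\in C$ is a unit-norm approximate eigenvector for $s$, proving that the open interval lies in $\sigma_{ap}(A)$.

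The endpoints follow by approximation: for $r(A)$, pick $s_k\uparrow r(A)$ inside the open interval and estimate $\|Au_k-r(A)u_k\|\le\|Au_k-s_ku_k\|+(r(A)-s_k)\to 0$, and symmetrically for the left endpoint. The converse inclusion $\sigma_{ap}(A)\subset[0,r(A)]$ follows from the operator identity $A^n-s^n\,\mathrm{id}=(A-s\,\mathrm{id})\sum_{j=0}^{n-1}s^jA^{n-1-j}$ valid on $C$ by linearity, which forces $\|A^nu_k-s^nu_k\|\to 0$ for each fixed $n$, whence $s\le r(A)$. Together these establish $r(A)=\max\sigma_{ap}(A)$. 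For the moreover statement, fix $x\in C\setminus\{0\}$: if $r_x(A)=0$, the ratios $\|A^{n+1}x\|/\|A^nx\|$ cannot stay bounded below (else iterating would contradict $\|A^nx\|^{1/n}\to 0$), so $A^nx/\|A^nx\|$ along a subsequence approximates the eigenvalue $0$; if $r_x(A)>0$, apply the partial-sum construction with $x$ in place of $x_n$, picking $s\in(\liminf\|A^nx\|^{1/n},\,r_x(A))$ and the subsequence of $N$ along which $\|A^Nx\|^{1/N}<s$, then let $s\uparrow r_x(A)$.

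The principal obstacle is the degenerate regime---the endpoint case $\sup_{x\in C'}r_x(A)=r(A)$ for the first part and the convergent-ratio case $\liminf\|A^nx\|^{1/n}=r_x(A)$ for the moreover part---where the clean interior partial-sum estimate collapses. In these regimes one has to route through the bounded linear extension $\tilde A$ of $A$ to $\overline{C-C}$, use that the boundary of the spectrum of a bounded linear operator lies in its approximate point spectrum, and transport approximate eigenvectors back into $C$ via the bounded positive decomposition that a normal generating wedge affords; this is where the normality hypothesis does its essential work.
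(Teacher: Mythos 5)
Your interior argument is fine: for $s$ strictly between $\sup_{x\in C'}r_x(A)$ and $r(A)$ the Neumann-type sum $y=\sum_{k=0}^{N-1}s^{-k-1}A^kx_n$ telescopes to $Ay-sy=s^{-N}A^Nx_n-x_n$, normality gives $\|y\|\ge M^{-1}s^{-n-1}\|A^nx_n\|\to\infty$, and the residual stays bounded; the limiting argument then picks up the endpoints \emph{provided the open interval is nonempty}. But the whole content of ``$r(A)=\max\sigma_{ap}(A)$'' lies in the degenerate case $\sup_{x\in C'}r_x(A)=r(A)$ (and, for the ``moreover'' part, the case $\lim_n\|A^nx\|^{1/n}=r_x(A)$), where your construction gives residual and norm of the same order. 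Your proposed rescue for exactly these cases does not work. First, normality of a wedge does not provide the ``bounded positive decomposition'' you invoke: normality ($\|x\|\le M\|y\|$ for $0\le x\le y$ in $C$) and bounded generation of $C-C$ are different (essentially dual) properties, $C-C$ need not be all of $X$ nor closed, and since $X$ is only a normed space and $C$ only a wedge, no And\^o-type open mapping theorem is available. Consequently the algebraic linear extension $\tilde A$ of $A$ to $C-C$ need not be bounded, and even its spectral radius need not coincide with the Bonsall cone spectral radius $r(A)$ computed from $\|A^n\|=\sup\{\|A^nx\|/\|x\|:x\in C\}$. Second, and more fundamentally, even granting a bounded $\tilde A$ with $r(\tilde A)=r(A)\in\partial\sigma(\tilde A)\subset\sigma_{ap}(\tilde A)$, the approximate eigenvectors produced live in $\overline{C-C}$; writing such a vector as $x-y$ with $x,y\in C$ gives no mechanism for extracting a single unit vector $u\in C$ with $\|Au-r(A)u\|$ small. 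Producing an approximate eigenvector \emph{inside the cone} is precisely the nontrivial assertion of the theorem, so this step is circular as sketched.

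Note also that the paper does not reprove Theorem \ref{main_normal}; it quotes it from \cite[Theorem 4.2 and Corollary 4]{MP17}. The technique that actually closes the degenerate case is the one displayed in this paper's proof of the companion result, Theorem \ref{dAnormal}: normalize so the relevant radius is $1$, use submultiplicativity/supermultiplicativity to select by a combinatorial (``no index can be much smaller than both neighbours'') argument an index $m$ with $\|A^{mn}x\|\ge\frac14\max\{\|A^{(m-1)n}x\|,\|A^{(m+1)n}x\|\}$, and then test with the tent-weighted sum $u=\sum_j \frac{c_j}{n}A^jx$ over the window $[(m-1)n,(m+1)n)$, for which $\|Au-u\|\le\frac{2M}{n}\|y\|$ while $\|u\|\ge\frac12\|A^{mn}x\|$. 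That device makes no use of a gap between local and global radii and is what your proof is missing. A smaller issue: your telescoping identity $A^n-s^n\,\mathrm{id}=(A-s\,\mathrm{id})\sum_js^jA^{n-1-j}$ cannot be applied literally on $C$ (differences leave the wedge); the inclusion $\sigma_{ap}(A)\subset[d(A),r(A)]$ is obtained from the Lipschitz hypothesis as in Proposition \ref{basic}, which you should cite instead.
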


\section{Lower spectral radius and approximate point spectrum}

Let $X$ be a normed space and $C \subset X$ a non-zero cone. Let $A:C \to C$ be positively homogeneous and bounded mapping.
 Let $m(A):=\inf\{\|Ax\|:x\in C, \|x\|=1\}$ be the minimum modulus of $A$ (note that in \cite{APV04}, $m(A)$ is called the inner norm of $A$ in the case when $C=X$). Observe  that $m(A)=0$ if and only if $0 \in  \sigma_{ap}(A)$. It is easy to see that $m(A^{n+m})\ge m(A^n)m(A^m)$ for all $m,n\in\NN$. It is well known that this implies that the limit $\lim_{n\to\infty}m(A^n)^{1/n}$ exists and is equal to the supremum $\sup_n m(A^n)^{1/n}$.
Let 
$$
d(A)=\lim_{n\to\infty}m(A^n)^{1/n}
$$
be the "lower spectral radius" of $A$.

If $A:C\to C$ is bijective and $A^{-1}$ is bounded, then $m(A)=\|A^{-1}\|^{-1}$ and
$d(A)=r(A^{-1})^{-1}$. If $X$, $C$ and $A^{-1}$ satisfy the assumptions of Theorem \ref{appl_to_matrices} or of Theorem \ref{main_normal}, then $r(A^{-1})\in\si_{ap}(A^{-1})$ and $\si_{ap}(A)=\{t^{-1}:t\in\si_{ap}(A^{-1})\}$ and thus also $d(A)\in\si_{ap}(A)$. We show below in Theorems \ref{dAisMin} and \ref{dAnormal} that this is true, not only for invertible mappings $A$, but also under similar assumptions as in Theorems \ref{appl_to_matrices} and \ref{main_normal}.

First we observe the following result.
\begin{proposition}
Let $X$ be a normed space and $C \subset X$ a non-zero cone. If $A:C \to C$ is positively homogeneous and bounded, then
\be
d(A)\le\inf\bigl\{r_x(A): x\in C, x\ne 0\bigr\}\le
\sup\bigl\{r_x(A): x\in C, x\ne 0\bigr\}\le r(A).
\label{d_ineq_r}
\ee
If, in addition, $A$ is Lipschitz, then $\si_{ap}(A)\subset [d(A),r(A)]$.
\label{basic}
\end{proposition}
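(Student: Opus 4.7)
The plan is to dispatch the two statements separately; neither is deep, but the second one requires some care because $A$ is only positively homogeneous and Lipschitz, not linear.

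For the chain \eqref{d_ineq_r}, the middle inequality is trivial and the rightmost one is the observation already recorded in the text ($\|A^n x\|\le\|A^n\|\|x\|$ gives $r_x(A)\le r(A)$). The leftmost inequality is the only thing to verify. Fix $x\in C$ with $x\ne 0$ and set $y=x/\|x\|$; then $\|y\|=1$ forces $m(A^n)\le\|A^n y\|=\|A^n x\|/\|x\|$, so
\[
m(A^n)^{1/n}\le\|A^n x\|^{1/n}\|x\|^{-1/n}.
\]
Passing to $\limsup_{n\to\infty}$, the left-hand side tends to $d(A)$, the factor $\|x\|^{-1/n}\to 1$, and the remaining factor has $\limsup$ equal to $r_x(A)$. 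Taking the infimum over $x\ne 0$ gives $d(A)\le\inf_{x\in C,x\ne 0}r_x(A)$.

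For the second statement, assume $A$ is Lipschitz with constant $L>0$ and let $s\in\si_{ap}(A)$. Choose a sequence $(x_n)\subset C$ with $\|x_n\|=1$ and $\|Ax_n-sx_n\|\to 0$. The key point is that $A$ iterates well on approximate eigenvectors: by positive homogeneity $A^{k-1}(sx_n)=sA^{k-1}(x_n)$, so the Lipschitz bound gives
\[
\bigl\|A^k x_n - sA^{k-1}x_n\bigr\|=\bigl\|A^{k-1}(Ax_n)-A^{k-1}(sx_n)\bigr\|\le L^{k-1}\|Ax_n-sx_n\|.
\]
Inserting this into the telescoping estimate
\[
\bigl\|A^k x_n - s^k x_n\bigr\|\le\sum_{j=0}^{k-1} s^j\bigl\|A^{k-j}x_n - sA^{k-j-1}x_n\bigr\|
\le\|Ax_n-sx_n\|\sum_{j=0}^{k-1}s^j L^{k-j-1},
\]
and letting $n\to\infty$ (with $k$ fixed) yields $\|A^k x_n-s^k x_n\|\to 0$, hence $\|A^k x_n\|\to s^k$.

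Finally, sandwich $\|A^k x_n\|$: on one hand $\|A^k x_n\|\le\|A^k\|$, and on the other $\|A^k x_n\|\ge m(A^k)$ (since $\|x_n\|=1$). Passing to the limit in $n$ gives $m(A^k)\le s^k\le\|A^k\|$, and then taking $k$-th roots and letting $k\to\infty$ produces $d(A)\le s\le r(A)$. Thus $\si_{ap}(A)\subset[d(A),r(A)]$.

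The main obstacle here is purely the nonlinearity: one cannot expand $A^k x - s^k x$ algebraically, and the routine linear-algebra argument has to be replaced by the Lipschitz-plus-homogeneity telescoping above. Once that estimate is in place, the rest reduces to the defining inequalities of $m(A^k)$ and $\|A^k\|$.
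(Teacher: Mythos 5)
Your proof is correct and follows essentially the same route as the paper: the chain \eqref{d_ineq_r} comes from the sandwich $m(A^j)\le\|A^jx\|\le\|A^j\|$ for unit vectors, and the spectral inclusion comes from showing $s^k\in\si_{ap}(A^k)$ and squeezing $m(A^k)\le s^k\le\|A^k\|$. The only difference is that you write out the Lipschitz-plus-homogeneity telescoping estimate in full, whereas the paper states this step as "easily seen" and refers to \cite[Lemma 3.3]{MP17}.
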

\begin{proof} If $x\in C$, $\|x\|=1$ then $m(A^j)\le \|A^jx\|\le\|A^j\|$ for all $j\in\NN$. So $d(A)\le r_x(A)\le r(A)$, which proves (\ref{d_ineq_r}).

If, in addition, $A$ is Lipschitz and $t\in\si_{ap}(A)$ then one can see easily that $m(A)\le t\le\|A\|$ 
 and $t^n\in\si_{ap}(A^n)$ for all $n\in\NN$ (see also the proof of \cite[Lemma 3.3]{MP17}). 
Since $m(A^n)\le t^n\le \|A^n\|$, it follows that $d(A)\le t\le r(A)$, which completes the proof.
\end{proof}

The following example shows that $\si_{ap}(A)$ may not contain the whole interval $\bigl[d(A), \inf\{r_x(A): x\in C,\; x\ne 0\}\bigr]$.
\begin{example} {\rm
Let $X=\ell^\infty$ with the standard basis $e_{n,k}\quad(n,k\in\NN)$. More precisely, the elements of $X$ are formal sums $x=\sum_{n,k\in\NN}\alpha_{n,k}e_{n,k}$ with real coefficient $\alpha_{n,k}$ such that
$$
\|x\|:=\sup\{|\alpha_{n,k}|: n,k\in\NN\} <\infty.
$$
Then $X$ is  a Banach lattice with the natural order.
Let $C=X_+$ and let $A:C\to C$ be defined by
$Ae_{n,1}=n^{-1}e_{n,2}$, $Ae_{n,k}=e_{n,k+1}\quad(k\ge 2).$ More precisely, 
$$
A\Bigl(\sum_{n,k\in\NN}\alpha_{n,k}e_{n,k}\Bigr)=
\sum_{n\in\NN} \Bigl(\alpha_{n,1} n^{-1} e_{n,2}+\sum_{k=2}^\infty \alpha_{n,k}e_{n,k+1}\Bigr).
$$
Then $A$ is  positively homogeneous, additive, Lipschitz mapping that preserves finite suprema, such that 
$d(A)=0$ and $r_x(A)=1$ for all nonzero $x\in C$. Moreover, $\si_{ap}(A)=\{0,1\}$ and so $\si_{ap}(A)$ does not contain the whole interval \\ $\bigl[d(A), \inf\{r_x(A): x\in C, \; x\ne 0\}\bigr]$.
}
\label{interval_not}
\end{example}

By Theorem \ref{appl_to_matrices},
$r(A)=\max\{t:t\in\si_{ap}(A)\}$ if  $X$ is a normed vector lattice, $C\subset X_+$  a non-zero max-cone and $T:C\to C$  a mapping, which is Lipschitz, positively homogeneous and preserves finite suprema.
 We show below in Theorem \ref{dAisMin} that under these assumptions we also have that $d(A)=\min\{t: t\in\si_{ap}(A)\}$. In the proof we will need the following lemmas (\cite[Lemma 3.1, Lemma 3.2]{MP17}). The first one is  based on the inequality (\ref{Birk_inq}). 
\begin{lemma}
\label{Birk}
Let $X$ be a normed vector lattice and let $x_1,\dots,x_n,y_1,\dots,y_n\in X$. Then
$$
\Bigl\|\bigvee_{j=1}^n x_j-\bigvee_{j=1}^n y_j\Bigr\|\le
\sum_{j=1}^n \|x_j-y_j\|.
$$
\end{lemma}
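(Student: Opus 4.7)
The plan is to derive the lemma as a direct consequence of Birkhoff's inequality (\ref{Birk_inq}) together with the defining property of a normed vector lattice, namely that $|a|\le|b|$ implies $\|a\|\le\|b\|$ (which in particular yields $\|a\|=\||a|\|$ for every $a\in X$, since $|a|$ and $a$ have the same absolute value). So the whole task is to transport an already-established pointwise lattice estimate to the normed setting.

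Concretely, I would set $u=\bigvee_{j=1}^n x_j$ and $v=\bigvee_{j=1}^n y_j$, and apply (\ref{Birk_inq}) to obtain the pointwise bound
$$|u-v|\le \sum_{j=1}^n |x_j-y_j|.$$
Since the right-hand side is a positive vector and the norm on $X$ is monotone on the positive cone, taking norms yields $\||u-v|\|\le\Bigl\|\sum_{j=1}^n |x_j-y_j|\Bigr\|$. The left-hand side equals $\|u-v\|$ by the lattice-norm property $\|a\|=\||a|\|$, while the right-hand side is bounded, via the ordinary triangle inequality and another application of $\|a\|=\||a|\|$, by $\sum_{j=1}^n \||x_j-y_j|\|=\sum_{j=1}^n \|x_j-y_j\|$. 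Chaining these three estimates yields the claimed inequality. No substantive obstacle is expected: the only ingredient beyond Birkhoff's inequality is the very definition of a normed vector lattice, and the argument reduces to a short one-line calculation once (\ref{Birk_inq}) is in hand.
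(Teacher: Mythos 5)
Your proof is correct and follows exactly the route the paper indicates: the lemma is quoted from \cite[Lemma 3.1]{MP17} with the remark that it ``is based on the inequality (\ref{Birk_inq})'', and your argument (apply (\ref{Birk_inq}), use monotonicity of the lattice norm together with $\|a\|=\||a|\|$, then the triangle inequality) is the standard way to carry that out. Nothing further is needed.
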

\begin{lemma} Let $X$ be a vector lattice and $x_j, y_j \in X$ 
for $j= 1, \ldots, n$. Then 
\be
\bigvee_{j=1}^n x_j - \bigvee_{j=1}^n y_j  \le \bigvee_{j=1}^n (x_j -y_j) .
\label{supineq}
\ee
If, in addition, $X$ is a normed vector lattice and  $x_j \ge y_j \ge 0$ 
for $j= 1, \ldots, n$, then 
\be
\Bigl\|\bigvee_{j=1}^n x_j - \bigvee_{j=1}^n y_j \Bigr\| \le \Bigl\|\bigvee_{j=1}^n (x_j -y_j) \Bigr\|.
\label{supineq2}
\ee
\label{sup}
\end{lemma}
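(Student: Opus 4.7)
The plan is to prove the two inequalities in turn, with the first being the substantive step and the second being an immediate consequence combined with the monotonicity of the norm on a normed vector lattice.

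For the inequality (\ref{supineq}), I would argue as follows. Fix any index $k \in \{1,\dots,n\}$. Since $x_k = (x_k - y_k) + y_k$, and since $x_k - y_k \le \bigvee_{j=1}^n (x_j - y_j)$ and $y_k \le \bigvee_{j=1}^n y_j$, addition of these inequalities (which preserves order in a vector lattice) yields
\[
x_k \le \bigvee_{j=1}^n (x_j - y_j) + \bigvee_{j=1}^n y_j.
\]
The right-hand side is independent of $k$, so taking the supremum over $k = 1,\dots,n$ on the left gives
\[
\bigvee_{k=1}^n x_k \le \bigvee_{j=1}^n (x_j - y_j) + \bigvee_{j=1}^n y_j,
\]
which rearranges to (\ref{supineq}).

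For the inequality (\ref{supineq2}), the assumption $x_j \ge y_j \ge 0$ guarantees that each $x_j - y_j \ge 0$, hence $\bigvee_{j=1}^n (x_j - y_j) \ge 0$; and it also guarantees $\bigvee_{j=1}^n x_j \ge \bigvee_{j=1}^n y_j \ge 0$, so $\bigvee_{j=1}^n x_j - \bigvee_{j=1}^n y_j \ge 0$. Thus (\ref{supineq}) gives
\[
0 \le \bigvee_{j=1}^n x_j - \bigvee_{j=1}^n y_j \le \bigvee_{j=1}^n (x_j - y_j).
\]
Since $X$ is a normed vector lattice, the order relation $0 \le a \le b$ implies $|a| = a \le b = |b|$, and the defining property $|a| \le |b| \Rightarrow \|a\| \le \|b\|$ of a normed vector lattice yields (\ref{supineq2}).

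There is no real obstacle here: the key move is only the decomposition $x_k = (x_k - y_k) + y_k$ and the observation that a vector-lattice supremum dominates each of its terms. The second part just combines positivity of both sides with the monotonicity of the lattice norm. No use of Birkhoff's inequality (\ref{Birk_inq}) or of Lemma \ref{Birk} is needed, which is consistent with the lemma's intended role as a sharper, one-sided complement to Lemma \ref{Birk}.
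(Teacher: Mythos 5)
Your proof is correct: the decomposition $x_k=(x_k-y_k)+y_k$ followed by taking the supremum over $k$ is the standard argument for (\ref{supineq}), and the passage to (\ref{supineq2}) via $0\le \bigvee_j x_j-\bigvee_j y_j\le \bigvee_j(x_j-y_j)$ and monotonicity of the lattice norm is exactly right. The paper does not reproduce a proof here (it cites \cite[Lemma 3.2]{MP17}), but your argument is the natural one and there is nothing to add.
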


The following result is one of the main results of this section.

\begin{theorem} Let $X$ be a normed vector lattice, let $C\subset X_+$ be a non-zero max-cone. Let $A:C\to C$ be a mapping which is bounded, positively homogeneous and preserves finite suprema. Then
$d(A)\in\si_{ap}(A)$. 

If, in addition, $A$ is  Lipschitz, then $d(A)=\min\bigl\{t: t\in\si_{ap}(A)\bigr\}$. 
\label{dAisMin}
\end{theorem}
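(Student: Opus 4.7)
The second assertion follows immediately from the first together with Proposition~\ref{basic}: the Lipschitz hypothesis gives $\si_{ap}(A)\subset[d(A),r(A)]$, so once $d(A)\in\si_{ap}(A)$ is known it must equal $\min\si_{ap}(A)$. It thus suffices to prove the first assertion.

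If $d(A)=0$, the supermultiplicativity $m(A^{n+m})\ge m(A^n)m(A^m)$ yields $d(A)\ge m(A)$, so $m(A)=0$ and $0\in\si_{ap}(A)$. Assume henceforth $d:=d(A)>0$. My plan is to establish $t\in\si_{ap}(A)$ for every $t>d$ and then deduce $d\in\si_{ap}(A)$ by closedness of $\si_{ap}(A)$, which is easily verified: if $t_n\to t$ and $\|Au_n-t_n u_n\|\to 0$ with $\|u_n\|=1$, then $\|Au_n-tu_n\|\le\|Au_n-t_n u_n\|+|t_n-t|\to 0$.

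Fix $t>d$. For each $n$, choose $y_n\in C$ with $\|y_n\|=1$ and $\|A^n y_n\|\le (1+1/n)m(A^n)$ (possible by definition of $m(A^n)$, which is strictly positive when $d>0$). For an integer $k_n\in[0,n)$ to be specified, set $x_n=\bigvee_{k=k_n}^{n-1}t^{-k}A^k y_n$. Because $A$ preserves finite suprema,
$$
Ax_n=t\bigvee_{k=k_n+1}^{n}t^{-k}A^k y_n,\qquad tx_n=t\bigvee_{k=k_n}^{n-1}t^{-k}A^k y_n.
$$
Letting $S_n=t\bigvee_{k=k_n}^{n}t^{-k}A^k y_n$, the lattice identities $0\le S_n-Ax_n\le t^{1-k_n}A^{k_n}y_n$ and $0\le S_n-tx_n\le t^{1-n}A^n y_n$, combined with monotonicity of the lattice norm (in the spirit of Lemma~\ref{Birk}), give
$$
\|Ax_n-tx_n\|\le t^{1-k_n}\|A^{k_n}y_n\|+t^{1-n}\|A^n y_n\|.
$$
The tail contribution is $O(t\cdot m(A^n)/t^n)$ and vanishes since $m(A^n)^{1/n}\to d<t$. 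The front contribution is controlled through the chain estimate $m(A^{n-k_n})\|A^{k_n}y_n\|\le\|A^n y_n\|$ (from $\|A^n y_n\|=\|A^{n-k_n}(A^{k_n}y_n)\|\ge m(A^{n-k_n})\|A^{k_n}y_n\|$) together with $m(A^j)^{1/j}\to d$: choosing $k_n$ so that both $k_n\to\infty$ and $n-k_n\to\infty$ at appropriate rates forces it to vanish as well.

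The main obstacle will be to ensure that $\|x_n\|$ remains bounded below at the same scale as the error, so that after normalizing, the ratio $\|A(x_n/\|x_n\|)-t(x_n/\|x_n\|)\|$ still tends to zero. A natural lower bound $\|x_n\|\ge t^{-k}\|A^k y_n\|\ge t^{-k}m(A^k)$ is available for every $k\in[k_n,n-1]$, and the subexponential regularity $m(A^k)^{1/k}\to d$ is precisely what should permit, via a delicate joint tuning of $k_n$, $n-k_n$, and the accuracy $1/n$, that the error-to-norm ratio vanishes along the constructed sequence. Granted this balance, $t\in\si_{ap}(A)$ for every $t>d$, and letting $t\searrow d$ together with closedness of $\si_{ap}(A)$ yields $d\in\si_{ap}(A)$, completing the proof.
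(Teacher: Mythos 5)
Your reduction of the second assertion to the first via Proposition~\ref{basic}, and your treatment of the case $d(A)=0$, are both fine. But the strategy for $d(A)>0$ --- prove $t\in\si_{ap}(A)$ for every $t>d(A)$ and then let $t\searrow d(A)$ using closedness of $\si_{ap}(A)$ --- cannot work, because the claim it rests on is false. Take $A=2I$ on any nonzero max-cone: then $m(A^n)=\|A^n\|=2^n$, so $d(A)=2$, while $\si_{ap}(A)=\{2\}$; no $t>d(A)$ lies in the approximate point spectrum. (Proposition~\ref{basic} already forces $\si_{ap}(A)\subset[d(A),r(A)]$ for Lipschitz $A$, and $r(A)$ can equal $d(A)$; Example~\ref{interval_not} likewise warns that $\si_{ap}(A)$ need not contain any interval to the right of $d(A)$.) Your own estimates signal the obstruction: the dominant error term is $t\cdot t^{-k_n}\|A^{k_n}y_n\|$, while the only lower bound you have for $\|x_n\|$ is $t^{-k_n}\|A^{k_n}y_n\|$ (monotonicity of the norm applied to the single term $k=k_n$ of the supremum), so the error-to-norm ratio is only bounded by $t$, and no tuning of $k_n$ and $n-k_n$ can push it below a fixed $\e$. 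The ``delicate joint tuning'' you defer to is precisely the step that cannot be carried out.

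The proof has to work at the level $t=d(A)$ itself (normalize $d(A)=1$), and the missing idea is a pigeonhole argument on the orbit rather than a geometric reweighting by $t^{-k}$. Taking $x$ with $\|x\|=1$ and $\|A^Nx\|$ close to $m(A^N)$, one shows there must exist $m$ with $\|A^{mn}x\|\ge\frac{1}{4}\max\bigl\{\|A^{(m-1)n}x\|,\|A^{(m+1)n}x\|\bigr\}$: otherwise the sequence $a_j=\|A^{jn}x\|$ would have to decay and then grow geometrically with ratio $4$, contradicting the two-sided bounds $m(A^{jn})\le a_j$ and $a_{N/n}<(1+\de)^N$. Around such a ``flat spot'' one forms the tent-weighted supremum $u$ of the iterates $A^jx$, $j\in[(m-1)n,(m+1)n)$, with weights $\frac{1}{n},\frac{2}{n},\dots,1,\dots,\frac{1}{n}$; Lemma~\ref{sup} then gives $\|Au-u\|\le n^{-1}\|y\|$ for $y=\bigvee_jA^jx$, while $\|u\|\ge\frac{n-1}{n}\|A^{mn}x\|$, and the flatness (after disposing of the easy case $\|y\|\ge 8\e^{-1}\|A^{mn}x\|$ via Lemma~\ref{Birk}) bounds $\|y\|$ by a constant multiple of $\|A^{mn}x\|$. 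None of this machinery is present in your sketch, and without it the argument does not close.
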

\begin{proof}
If $d(A)=0$ then $m(A)=0$ and $0\in\si_{ap}(A)$.

Let $d(A)> 0$. Without loss of generality we may assume that $d(A)=1$.

Let $\e>0$. We show that there exists $ w \in C$, $w\ne 0$ such that $\frac{\|Aw-w\|}{\|w\|}\le\e$.

Let $n\in\NN$ satisfy $n>\max\{2,16\e^{-2}\}$ and $m(A^n)>0$. Find $\de>0$ such that $(1-\de)^n\ge \frac{1}{2}$ and $(1+\de)^n\le 2$.
Find $N_1\in\NN$, $n|N_1$, $N_1 \ge 2n$ and 
$$
(1-\de)^N<m(A^N)<(1+\de)^N
$$
for all $N\ge N_1$.
Find $s\in\NN$, $s\ge 16$ with
$m(A^n)>2^{-s/4}$ and let $N=sN_1$.

Find $x\in C$, $\|x\|=1$ such that $\|A^Nx\|<(1+\de)^N\le 2^{N/n}$. We also have  $\|A^Nx\|\ge m(A^N)>(1-\de)^N\ge 2^{-N/n}$.

Consider the vectors $x, A^nx, A^{2n}x,\dots,A^Nx$.
Write for short $a_j=\|A^{jn}x\|$ for $j=0,\dots,N/n$.
\bigskip

\noindent{\it Claim.}
There exists $m, 1\le m<N/n$ with
$$
a_m\ge \frac{1}{4}\max\{a_{m-1},a_{m+1}\}.
$$
\medskip

Suppose on the contrary that $4a_m<\max\{a_{m-1},a_{m+1}\}$ for all $m=1,\dots,N/n-1$.

If $a_{j+1}\ge a_j$ for some $j\le \frac{N}{n}-2$, then this condition for $j+1$ means that $a_{j+2}>4a_{j+1}$. By induction we get
$$
a_j\le a_{j+1}\le a_{j+2}\le \cdots\le a_{N/n}.
$$
Similarly, if $a_{j+1}\le a_j$ for some $j\ge 1$ then $a_{j-1}>4a_j$. By induction, we get
$$
a_{j+1}\le a_{j}\le a_{j-1}\le\cdots\le a_0=1.
$$
So there exists $k, 0\le k\le N/n$ such that
$$
a_0\ge a_1\ge\cdots\ge a_{k-1}\ge a_k\le a_{k+1}\le\cdots\le a_{N/n}.
$$
Moreover,
$$
1=a_0\ge 4a_1\ge 4^2a_2\ge\cdots\ge 4^{k-1}a_{k-1}
$$
and 
$$
a_{N/n}\ge 4a_{N/n-1}\ge\cdots\ge 4^{\frac{N}{n}-k-1}a_{k+1}.
$$

If $k\ge \frac{N_1}{n}+1$ then
$$
a_{k-1}=\|A^{(k-1)n}x\|\ge m(A^{(k-1)n})> (1-\de)^{(k-1)n}\ge\frac{1}{2^{k-1}},
$$
a contradiction with the estimate $a_{k-1}<\frac{1}{4^{k-1}}$.

If $k<\frac{N_1}{n}+1$ then $k+1\le\frac{2N_1}{n}=\frac{2N}{sn}$. We have
$$
a_{N/n} >
a_{k+1}\cdot 4^{\frac{N}{n}-k-1}\ge
\bigl(m(A^n)\bigr)^{k+1}4^{\frac{N}{n}-k-1}\ge
2^{-\frac{s}{4}\frac{2N}{sn}}\cdot 4^{\frac{N}{n}-\frac{2N}{sn}}
$$
$$
\ge
2^{-\frac{N}{2n}}\cdot 2^{\frac{2N}{n}}\cdot 2^{\frac{-4N}{sn}}\ge
2^{\frac{N}{n}(\frac{3}{2}-\frac{1}{4})}=
2^\frac{5N}{4n}.
$$
However, $a_{N/n}=\|A^Nx\|<(1+\de)^N\le 2^{N/n}$, a contradiction.
\bigskip

\noindent{\it Continuation of the Proof of Theorem \ref{dAisMin}.}
Let $m, 1\le m<N/n$ satisfy $\|A^{mn}x\|\ge\frac{1}{4}\max\bigl\{\|A^{(m-1)n}x\|,\|A^{(m+1)n}x\|\bigr\}$.

Let $y=\bigvee_{j=(m-1)n}^{(m+1)n-1}A^jx$. Then
$Ay=\bigvee_{j=(m-1)n+1}^{(m+1)n}A^jx$ and by Lemma \ref{Birk} we have $ \|Ay-y\|\le\|A^{(m-1)n}x\|+\|A^{(m+1)n}x\|\le 8\|A^{mn}x\|$. 
If $\|y\|\ge 8\e^{-1}\|A^{mn}x\|$ then $\frac{\|Ay-y\|}{\|y\|}\le \e$ and we are done. So assume that $\|y\|< 8\e^{-1}\|A^{mn}x\|$.

Let
$$
u=\Bigl(\frac{1}{n}A^{(m-1)n}x\vee \frac{2}{n}A^{(m-1)n+1}x\vee \cdots 
\vee
\frac{n-1}{n}A^{mn-2}x\vee A^{mn-1}x\Bigr)
$$
$$
\vee
\Bigl(\frac{n-1}{n}A^{mn}x\vee \cdots\vee \frac{2}{n}A^{(m+1)n-3}x\vee
\frac{1}{n}A^{(m+1)n-2}x\Bigr).
$$
We have $\|u\|\ge\frac{n-1}{n}\|A^{mn}x\|\ge\frac{1}{2}\|A^{mn}x\|$. Furthermore, by Lemma \ref{sup}
$$
Au-u\le \frac{1}{n}\Bigl(A^{(m-1)n}x\vee A^{(m-1)n+1}x\vee\cdots\vee A^{(m+1)n-1}x\Bigr) = \frac{y}{n},
$$
and similarly, $u-Au\le \frac{y}{n}$. Hence
$\|Au-u\|\le n^{-1}\|y\|< 8n^{-1}\e^{-1}\|A^{mn}x\|$ and
$$
\frac{\|Au-u\|}{\|u\|}\le\frac{16}{n\e}< \e.
$$

Since $\e>0$ was arbitrary, $1\in\si_{ap}(A)$.

If, in addition, $A$ is  Lipschitz, then $d(A)=\min\{t: t\in\si_{ap}(A)\}$ by Proposition \ref{basic}.
\end{proof}

 By replacing  $\vee$ with $+$ in the proof above and by suitably adjusting some estimates 
the following theorem also follows. We include some details of the proof for the sake of clarity.

\begin{theorem}
Let $X$  be a normed space, $C\subset X$ a non-zero normal wedge and let $A:C\to C$ be positively homogeneous, additive and   bounded. 
Then $d(A)\in\sigma_{ap}(A)$. 

If, in addition, $A$ is Lipschitz, then  $d(A)=\min\bigl\{t: t\in\si_{ap}(A)\bigr\}$.
\label{dAnormal}
\end{theorem}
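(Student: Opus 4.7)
The plan is to mirror the proof of Theorem~\ref{dAisMin}, systematically replacing each supremum by a sum and using the normality constant $M$ of $C$ in place of Birkhoff's inequality (Lemma~\ref{Birk}).

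The case $d(A)=0$ is trivial since then $m(A)=0$ and $0\in\sigma_{ap}(A)$. Otherwise I may assume $d(A)=1$, fix $\varepsilon>0$, and choose $n,\delta,N_1,s,N$ as in the proof of Theorem~\ref{dAisMin}, but with $n$ taken large enough to absorb $M$ (roughly $n>32M^2\varepsilon^{-2}$). I then pick $x\in C$ of norm $1$ with $2^{-N/n}<\|A^Nx\|<2^{N/n}$ and set $a_j=\|A^{jn}x\|$. The Claim from the proof of Theorem~\ref{dAisMin}---that there exists $m$ with $1\le m<N/n$ and $a_m\ge \tfrac{1}{4}\max\{a_{m-1},a_{m+1}\}$---carries over verbatim, since its proof uses only submultiplicativity of $m(A^{jn})$ and the bounds $(1-\delta)^N<m(A^N)<(1+\delta)^N$.

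The core adaptation is the construction of the candidate approximate eigenvectors. Set
\[
y=\sum_{j=(m-1)n}^{(m+1)n-1}A^jx,\qquad
u=\sum_{i=0}^{n-1}\frac{i+1}{n}A^{(m-1)n+i}x+\sum_{j=0}^{n-2}\frac{n-1-j}{n}A^{mn+j}x.
\]
Additivity and positive homogeneity of $A$ yield the telescoping identities $Ay-y=A^{(m+1)n}x-A^{(m-1)n}x$ and
\[
Au-u=\frac{1}{n}\Bigl(\sum_{j=0}^{n-1}A^{mn+j}x-\sum_{i=0}^{n-1}A^{(m-1)n+i}x\Bigr).
\]
From the first, $\|Ay-y\|\le 8\|A^{mn}x\|$ by the Claim and the triangle inequality. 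Both partial sums appearing in the expression for $Au-u$ lie in $C$ and are dominated in the cone order by $y$, so normality gives $\|Au-u\|\le \frac{2M}{n}\|y\|$. Similarly $u-\frac{n-1}{n}A^{mn}x\in C$, so normality gives $\|u\|\ge \frac{n-1}{nM}\|A^{mn}x\|\ge \frac{1}{2M}\|A^{mn}x\|$ for $n\ge 2$.

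The final dichotomy runs exactly as before: if $\|y\|\ge 8\varepsilon^{-1}\|A^{mn}x\|$ then $\|Ay-y\|/\|y\|\le\varepsilon$ and $y$ is the desired vector; otherwise $\|y\|<8\varepsilon^{-1}\|A^{mn}x\|$ and the bounds above yield $\|Au-u\|/\|u\|\le 32M^2/(n\varepsilon)<\varepsilon$ by the choice of $n$. Since $\varepsilon>0$ was arbitrary, $d(A)=1\in\sigma_{ap}(A)$, and the equality $d(A)=\min\{t:t\in\sigma_{ap}(A)\}$ in the Lipschitz case is immediate from Proposition~\ref{basic}. The main obstacle is really only bookkeeping---keeping track of where $M$ enters and choosing $n$ large enough to absorb it---since the lattice argument already isolates the role of Birkhoff-type norm inequalities, and nothing more is needed on the additive side.
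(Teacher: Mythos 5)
Your proof is correct and follows essentially the same route as the paper's: replace each supremum by a sum, use the telescoping identities for $Ay-y$ and $Au-u$, and invoke normality of the wedge where the lattice argument used Birkhoff's inequality. Your slightly larger threshold $n>32M^2\varepsilon^{-2}$ (coming from the careful lower bound $\|u\|\ge\frac{1}{2M}\|A^{mn}x\|$ via normality) is in fact the safer bookkeeping, since the paper's stated estimate $2\|u\|\ge\|A^{mn}x\|$ implicitly absorbs a factor of $M$.
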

\begin{proof}
As in the proof of Theorem \ref{dAisMin} we may assume that $d(A)=1$. Let $\e>0$ and let $n\in \NN$ satisfy 
$n > \max\{2, 32M\e ^{-2} \}$ and $m(A^n)>0$, where $M$ is the normality constant of $C$. If $N \in \NN$ is chosen as   in the proof of Theorem \ref{dAisMin}, then it follows in the same way that there exists $m\in \NN, 1\le m<N/n$ such that
 $$\|A^{mn}x\|\ge\frac{1}{4}\max\bigl\{\|A^{(m-1)n}x\|,\|A^{(m+1)n}x\|\bigr\}.$$
Let $y=\sum _{j=(m-1)n} ^{(m+1)n-1} A^j x$. Then $Ay-y = A^{(m+1)n}x -A^{(m-1)n}x$ and so
$\|Ay-y\| \le 8 \|A^{mn}x\|$. Without loss of generality we may assume that  $\|y\|< 8\e^{-1}\|A^{mn}x\|$.
Let
$$
u=\Bigl(\frac{1}{n}A^{(m-1)n}x+ \frac{2}{n}A^{(m-1)n+1}x+\cdots 
+
\frac{n-1}{n}A^{mn-2}x+ A^{mn-1}x\Bigr)
$$
$$
+\Bigl(\frac{n-1}{n}A^{mn}x+ \cdots + \frac{2}{n}A^{(m+1)n-3}x +
\frac{1}{n}A^{(m+1)n-2}x\Bigr).
$$ 
Then $2\|u\| \ge \|A^{mn}x\| $ and 
$$\|Au-u\| = \frac{1}{n} \Bigl\|A^{(m+1)n-1}x+ \cdots + A^{mn}x - \left(A^{mn-1}x + \cdots + A^{(m-1)n} \right)\ \Bigr\|$$
$$\le  \frac{1}{n} \left (\bigl\|A^{(m+1)n-1}x+ \cdots + A^{mn}x \bigr\|+ \bigl\| A^{mn-1}x + \cdots + A^{(m-1)n}  \bigr\| \right) \le \frac{2M\|y\|}{n} $$
$$< \frac{16M}{n\e}\|A^{mn}x\| \le  \frac{32M}{n\e}\|u\| < \e \|u\|.$$
Since $\e>0$ was arbitrary, $1\in\si_{ap}(A)$.
\end{proof}
There are interesting non-trivial examples of operators to which Theorem \ref{dAnormal} applies. In particular, it applies to the $C$-linear Perron-Frobenius operators from \cite[Section 5]{MN10} and \cite[Sections 5 and 6]{N01}.

\begin{remark} {\rm In  \cite[Example 2]{MP17} there is an example of a Banach lattice $X$ and a closed max-cone 
$C \subset X_+$,  which is normal and convex (a normal wedge), and a  positively homogeneous, additive mapping $A:C \to C$ that preserves all suprema and satisfies $\|A\| = 1$.
However, $A$ is not Lipschitz, $d(A)=r(A)=0$ and $\{0, 1\} \subset \si_{ap}(A)$. So,   as pointed out in  \cite{MP17}, the Lipschitzity of $A$ is necessary for the property $r(A)=\max\{t: t\in\si_{ap}(A)\}$ to hold. 
}
\end{remark}

The following example shows that the Lipschitzity of $A$ is necessary for the property $d(A)=\min\{t: t\in\si_{ap}(A)\}$ to hold in Theorems \ref{dAisMin} and \ref{dAnormal}.

\begin{example}
{\rm
Let $X$ be as in Example \ref{interval_not}.
Let
$$
C=\Bigl\{\sum_{n,k\in\NN}\alpha_{n,k}e_{n,k}\in X_+:  \alpha_{n,k} \ge 0,\; \alpha_{n,2}\le n^{-1}\alpha_{n,1}\hbox{ for all } n\in\NN, k \in \NN \Bigr\}.
$$ 
Then $C$ is a max-cone (moreover $C$ is a convex normal cone). Let $A:C\to C$ be defined by
$$
A\Bigl(\sum_{n,k\in\NN}\alpha_{n,k}e_{n,k}\Bigr)=
\sum_{n\in\NN} \Bigl(\alpha_{n,1}e_{n,1}+n^{-1}\alpha_{n,1}e_{n,2}+2n\alpha_{n,2}e_{n,3}+\sum_{k=3}^\infty2\alpha_{n,k}e_{n,k+1}\Bigr).
$$
Clearly $A$ is bounded, $\|A\|=2$ and $A$ is a positively homogeneous, additive mapping on $C$ that preserves finite suprema. 
We have $\|Ae_{n,1}-e_{n,1}\|=\|n^{-1}e_{n,2}\|=n^{-1}$ and $\|A^2 e_{n,1} - Ae_{n,1}\| =\|2e_{n,3}\|=2$  for all $n$, so $1\in\sigma_{ap}(A)$ and $A$ is not Lipschitz. On the other hand, it is easy to see that $d(A)=2$.

}
\end{example}

\section{Spectral mapping theorems for point and approximate point spectrum.}
In this section we generalize the spectral mapping theorem in max-algebra (see \cite[Theorem 3.4]{MP15} and also \cite[Theorem 3.6]{KSS12} ) to the infinite dimensional setting. The main results of this section are Corollary \ref{without_constant}, Theorem \ref{as_good_as_possible} and Theorem  \ref{main_spectral_mapping}. 

Let $X$ be a Riesz space (i.e., a vector lattice). Let $C\subset X_+$ be a nonzero max-cone and $A:C\to C$ a positively homogeneous mapping that preserves finite suprema.
Let $\PP_+$ denote the set of all polynomials $q(z)=\sum_{j=0}^n\al_jz^j\in\PP_+$  with $\al_j\ge 0$ for all $j$. For $q\in \PP_+$ and $t\ge 0$ let $q_\vee(t)=\max_j \al_j t^j$ (i.e., $q_\vee $ is the maxpolynomial corresponding to $q$).
Define $q_\vee(A):C\to C$ by
$$
q_\vee(A)x=\bigvee_{j=1}^n \al_jA^jx\qquad(x\in C).
$$

\begin{lemma} Let $X$ be a vector lattice and  let $C\subset X_+$ be a nonzero max-cone. Let $A:C\to C$ be a positively homogeneous mapping that preserves finite suprema
and  $q=\sum_{j=0}^n\al_jz^j\in\PP_+$. Then
$$
q_\vee(\si_p(A))\subset\si_p(q_\vee(A)).
$$
\label{obvious_part}
\end{lemma}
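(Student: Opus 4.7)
The plan is to exhibit, for each $t \in \sigma_p(A)$, a single nonzero vector in $C$ that is simultaneously an eigenvector of $A$ with eigenvalue $t$ and of $q_\vee(A)$ with eigenvalue $q_\vee(t)$. This would immediately place $q_\vee(t)$ in $\sigma_p(q_\vee(A))$, giving the claimed inclusion.

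Start by picking $t \in \sigma_p(A)$ and a nonzero $x \in C$ with $Ax = tx$. Because $A$ is positively homogeneous and $t \ge 0$, a straightforward induction on $j$ yields $A^j x = t^j x$ for all $j \ge 0$: in the inductive step, $A^{j+1}x = A(t^j x) = t^j A x = t^{j+1} x$. Substituting into the definition of $q_\vee(A)$ then gives
$$q_\vee(A) x \;=\; \bigvee_j \alpha_j A^j x \;=\; \bigvee_j (\alpha_j t^j)\, x .$$

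It remains to evaluate this supremum. Because $x \ge 0$ in $X$, scalar multiplication by nonnegative reals is order preserving on the ray $\RR_+ x$: if $c \ge c' \ge 0$ then $(c-c')x \ge 0$, so $cx \ge c'x$. Hence the family $\{(\alpha_j t^j) x\}_j$ is totally ordered by the order of the scalars $\alpha_j t^j \ge 0$, its lattice supremum coincides with its largest member, and so
$$q_\vee(A) x \;=\; \Bigl(\max_j \alpha_j t^j\Bigr)\, x \;=\; q_\vee(t)\, x.$$
Since $x \ne 0$, this exhibits $q_\vee(t)$ as an element of $\sigma_p(q_\vee(A))$.

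There is essentially no obstacle in this argument; the lemma's internal label \emph{obvious\_part} reflects exactly this. The only points requiring any care are to justify iterating the eigenvalue relation via positive homogeneity and to note that on the half-line $\RR_+ x \subset X_+$ the lattice supremum reduces to the scalar maximum — the substantive direction (namely $\sigma_p(q_\vee(A)) \subset q_\vee(\sigma_p(A))$, or its analogue for $\sigma_{ap}$) is deferred to the subsequent results of this section.
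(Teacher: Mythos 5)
Your argument is correct and is essentially identical to the paper's: both take an eigenvector $x$ with $Ax=tx$, iterate via positive homogeneity to get $A^jx=t^jx$, and conclude $q_\vee(A)x=q_\vee(t)x$. The only difference is that you spell out why $\bigvee_j(\alpha_jt^j)x=(\max_j\alpha_jt^j)x$ on the ray $\RR_+x$, a step the paper leaves implicit.
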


\begin{proof}
Let $t\ge 0$, $x\in C$ and $Ax=tx$. Then $A^jx=t^jx$ for all $j\ge 0$,
 and so $q_\vee(A)x=q_\vee(t)x$. Hence $q_\vee(\si_p(A))\subset\si_p(q_\vee(A)).$
\end{proof}

\begin{lemma}  Let $X$ be a vector lattice and  let $C\subset X_+$ be a nonzero max-cone. Let $A:C\to C$ be a positively homogeneous, finite suprema preserving mapping.
Assume that $q(z)=\sum_{j=1}^n\al_jz^j\in\PP_+$, $q_\vee(1)=1$ and $1\in\si_p(q_\vee(A))$. Then $1\in\si_p(A)$.
\label{almost_done}
\end{lemma}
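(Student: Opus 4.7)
The plan is to construct a genuine fixed point for $A$ as a finite join of iterates applied to the eigenvector of $q_\vee(A)$. Let $x\in C$, $x\ne 0$, with $q_\vee(A)x=\bigvee_{j=1}^n \alpha_j A^jx=x$. Since $q_\vee(1)=\max_{1\le j\le n}\alpha_j=1$, one can fix some index $k\in\{1,\dots,n\}$ with $\alpha_k=1$. The first step is to note that the identity $x=\bigvee_{j=1}^n\alpha_jA^jx$ forces $\alpha_j A^jx\le x$ for every $j$, and in particular $A^kx\le x$. Because $A$ preserves finite suprema, $A$ is monotone, so iterating this inequality yields $A^{i+k}x\le A^ix$ for all $i\ge 0$, and hence by induction $A^jx\le A^rx$ whenever $j\equiv r\pmod k$ with $0\le r<k$.

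Next, I would set $y:=\bigvee_{j=0}^{n-1} A^jx$. This is a well-defined element of $C$ (as $C$ is a max-cone closed under finite suprema and $A$ maps $C$ into $C$), and $y\ge x\ne 0$. I would then verify $Ay=y$ by checking both inequalities. Since $A$ preserves finite suprema, $Ay=\bigvee_{j=1}^n A^jx$. The inequality $Ay\ge y$ reduces to showing $x\le Ay$, which follows immediately from $x=\bigvee_{j=1}^n\alpha_j A^jx\le \bigvee_{j=1}^n A^jx=Ay$ because each $\alpha_j\le 1$. For $Ay\le y$, the only nontrivial point is $A^nx\le y$: writing $n=qk+r$ with $0\le r<k$, the iterated monotonicity argument from the previous step gives $A^nx\le A^rx$, and since $k\le n$ we have $A^rx\in\{A^0x,\dots,A^{n-1}x\}$, so $A^rx\le y$. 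Hence $Ay=y$ with $y\in C\setminus\{0\}$, which yields $1\in\sigma_p(A)$.

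The main thing to be careful about is the iteration step and the need to work modulo $k$ rather than $n$: it is tempting to try $y=\bigvee_{j=0}^{k-1}A^jx$ (which does satisfy $A^ky\le y$), but this does not immediately give $Ay=y$ unless $k=n$. Taking the join up to $A^{n-1}x$ is what makes the argument $x\le\bigvee_{j=1}^n A^jx$ apply on the one side, while the monotonicity consequence $A^nx\le A^{n\bmod k}x$ handles the other side. The rest is book-keeping with the fact that $A$ preserving finite suprema automatically implies monotonicity and that finite joins of elements of $C$ stay in $C$.
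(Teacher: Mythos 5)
Your proof is correct and follows essentially the same route as the paper: the paper also picks $m$ with $\alpha_m=1$, deduces $A^mx\le x$ and hence $A^{m+j}x\le A^jx$ by monotonicity, takes $y=\bigvee_{j=0}^{m-1}A^jx$ (which it shows equals your $\bigvee_{j=0}^{n-1}A^jx$), and gets $x\le Ay$ from $\alpha_j\le 1$. The only difference is bookkeeping — the paper uses the shorter representation of $y$ to make $Ay\le y$ immediate, while you handle the term $A^nx$ by the mod-$k$ iteration instead.
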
 

\begin{proof}
Since $q_\vee(1)=1$, we have $\al_j\le 1$ for all $j$ and there exists $m, 1\le m\le n$ with $\al_m=1$.

Let $x\in C$ be a nonzero vector satisfying $q_\vee(A)x=x$.
Set $y=\bigvee_{j=0}^{m-1}A^jx$.
We have $A^mx\le q_\vee(A)x =  x$. Consequently, $A^{m+j}x\le A^jx$ for all $j\in\NN$.
Thus $y=\bigvee_{j=0}^{n-1} A^j x$. We have $Ay =\bigvee_{j=1}^{n} A^j x \ge q_\vee(A)x =x$.
Thus
$$Ay=x\vee \bigvee_{j=1}^m A^jx \ge \bigvee_{j=0}^{m-1} A^jx =y.$$
Conversely,  $A^mx \le x \le y$ and
$$
Ay=
\bigvee_{j=1}^{m-1} A^jx\vee A^mx \le 
y.
$$
Hence $Ay=y$ and $1\in \si_p(A)$.
\end{proof}

\begin{corollary} Let $X$ be a vector lattice and  let $C\subset X_+$ be a nonzero max-cone. Let $A:C\to C$ be a positively homogeneous, finite suprema preserving mapping
and  $q=\sum_{j=1}^n\al_jz^j\in\PP_+$ a non-zero polynomial. Then
$$
\si_p(q_\vee(A))=q_\vee(\si_p(A)).
$$
\label{without_constant}
\end{corollary}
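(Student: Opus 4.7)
The plan is to prove the two inclusions separately. The inclusion $q_\vee(\si_p(A))\subset\si_p(q_\vee(A))$ is exactly Lemma \ref{obvious_part}, so the real work is in showing $\si_p(q_\vee(A))\subset q_\vee(\si_p(A))$. The strategy is a reduction to Lemma \ref{almost_done} by rescaling $A$.

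First I would dispose of the case $s=0$. If $s\in\si_p(q_\vee(A))$ with $s=0$, choose a nonzero $x\in C$ with $q_\vee(A)x=0$. Since $0\le\alpha_jA^jx\le q_\vee(A)x=0$ for each $j$ and $q$ is nonzero, there is some $m\ge 1$ with $A^mx=0$. Taking $m$ minimal, the vector $y=A^{m-1}x$ is nonzero and satisfies $Ay=0$, so $0\in\si_p(A)$ and $q_\vee(0)=0=s$.

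For $s>0$ the key observation is that the function $t\mapsto q_\vee(t)=\max_j\alpha_jt^j$ is continuous on $[0,\infty)$, vanishes at $0$, and tends to $\infty$, so by the intermediate value theorem there exists $t>0$ with $q_\vee(t)=s$. I would then define $B:=t^{-1}A$, which is still positively homogeneous and finite suprema preserving, and consider the polynomial
$$p(z)=s^{-1}q(tz)=\sum_{j=1}^n \frac{\alpha_jt^j}{s}\,z^j\in\PP_+.$$
A direct computation gives $p_\vee(1)=s^{-1}q_\vee(t)=1$, and using $B^jx=t^{-j}A^jx$ together with $q_\vee(A)x=sx$ one checks that $p_\vee(B)x=x$, i.e.\ $1\in\si_p(p_\vee(B))$.

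Now Lemma \ref{almost_done} applies to $B$ and $p$, yielding $1\in\si_p(B)$; any witness $y\in C\setminus\{0\}$ with $By=y$ satisfies $Ay=ty$, so $t\in\si_p(A)$ and hence $s=q_\vee(t)\in q_\vee(\si_p(A))$. I expect no genuine obstacle here — the only mildly delicate point is checking that the rescaled polynomial $p$ has the normalization $p_\vee(1)=1$ required by Lemma \ref{almost_done}, and ensuring the argument degenerates gracefully at $s=0$ (where the rescaling $B=t^{-1}A$ is unavailable, forcing the separate kernel argument above).
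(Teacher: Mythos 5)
Your proof is correct and follows essentially the same route as the paper: the $\supset$ inclusion via Lemma \ref{obvious_part}, a separate kernel argument for the eigenvalue $0$, and for $s>0$ the rescaling $A'=t^{-1}A$, $p(z)=s^{-1}q(tz)$ with $p_\vee(1)=1$ feeding into Lemma \ref{almost_done}. Your explicit intermediate-value-theorem step producing $t$ with $q_\vee(t)=s$ is a small point the paper leaves implicit, but otherwise the arguments coincide.
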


\begin{proof}
The inclusion $\supset$ was proved above.
\smallskip

$\subset$: Let $t\ge 0$, $s=q_\vee(t)$ and $s\in\si_p(q_\vee(A))$. 
If $t=0=s$, $x\in C$, $x\ne 0$ and $q_\vee(A)x=0$, then there exists $m, 1\le m\le n$ with
$\al_m\ne 0$. So $\al_m A^m x=0$ and $A^mx=0$.
Find $k, 1\le k\le m-1$ with $A^kx\ne 0$ and $A^{k+1}x=0$. Then $A^kx$ is a nonzero eigenvector and $0\in\si_p(A)$.

Let $s=q_\vee(t)\ne 0$. Then $t\ne 0$. Consider the mapping $A'=t^{-1}A$ and polynomial
$p(z)=s^{-1}q(tz)=\sum_{j=1}^n\frac{\al_jt^j}{s}z^j$. Then $p_\vee(1)=1$.
Let $x\in C$ be a nonzero vector satisfying $q_\vee(A)x=sx$. Then 
$$
p_\vee(A')x=
\bigvee_{j=1}^n \frac{\al_j t^j}{s}\Bigl(\frac{A}{t}\Bigr)^jx=x.
$$
By  Lemma \ref{almost_done} there exists $y\in C$, $y\ne 0$ and $A'y=y$. Hence $Ay=ty$ and $t\in\si_p(A)$.
\end{proof}

\begin{lemma}
Let $X$ be a vector lattice, $x,y\in X_+$, $s>1$ and $x\vee y=sx$. Then $y=sx$.
\label{good}
\end{lemma}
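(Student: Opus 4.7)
The plan is to translate the equality $x\vee y=sx$ into a statement about the positive and negative parts of $y-x$, and then to use disjointness of those parts together with the assumption $y\ge 0$.

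First I would apply translation invariance of the lattice join, $(a\vee b)+c=(a+c)\vee(b+c)$, by subtracting $x$ from both sides of $x\vee y=sx$. This yields
$$
(y-x)^+=0\vee(y-x)=(x\vee y)-x=(s-1)x.
$$

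Next I would invoke the disjointness of positive and negative parts in a Riesz space, $(y-x)^+\wedge(y-x)^-=0$. Since $(y-x)^-=(x-y)^+$, the previous step turns this into
$$
(s-1)x\wedge(x-y)^+=0.
$$
Because $s>1$, the scalar $s-1$ is strictly positive, and positive homogeneity of $\wedge$ (i.e., $t(a\wedge b)=(ta)\wedge(tb)$ for $t\ge 0$) gives $x\wedge(x-y)^+=0$.

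Finally, using $y\ge 0$, I would observe that $(x-y)^+=(x-y)\vee 0\le x\vee 0=x$. Combined with $x\wedge(x-y)^+=0$, this forces $(x-y)^+=(x-y)^+\wedge x=0$, so $y\ge x$. Consequently $x\vee y=y$, and the hypothesis then yields $y=sx$.

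The only place where the assumption $s>1$ enters is in the cancellation of the positive scalar $s-1$ from the disjointness relation; this is really the heart of the lemma, and the rest is routine Riesz space bookkeeping, so I anticipate no serious obstacle.
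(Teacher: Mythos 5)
Your proof is correct, but it takes a genuinely different route from the paper. You reduce everything to the identity $(x\vee y)-x=(y-x)^+$ and the disjointness $(y-x)^+\wedge(y-x)^-=0$, so that the hypothesis becomes $(s-1)x\wedge(x-y)^+=0$; cancelling the strictly positive scalar $s-1$ and using $(x-y)^+\le x$ (which needs $y\ge 0$) kills $(x-y)^+$, giving $y\ge x$ and hence $y=x\vee y=sx$. One small point of rigour: ``positive homogeneity of $\wedge$'' by itself only lets you rescale \emph{both} arguments, so to pass from $(s-1)x\wedge(x-y)^+=0$ to $x\wedge(x-y)^+=0$ you should also invoke monotonicity of $\wedge$ (or the standard fact that disjointness of positive elements is preserved under positive scalar multiples, e.g.\ $(\lambda u)\wedge v\le (nu)\wedge(nv)=n(u\wedge v)$ for $n\ge\max\{\lambda,1\}$); this is routine but worth a word. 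The paper instead runs a bootstrapping induction: from $x+y\ge x\vee y=sx$ it gets $y\ge(s-1)x$, then repeatedly translates by $j(s-1)x$ and rescales to push this to $y\ge j(s-1)x$ for $j$ up to the first $k$ with $k(s-1)\ge 1$, concluding $y\ge x$. Your argument is shorter and makes transparent exactly where $s>1$ enters (the cancellation of $s-1$ from the disjointness relation), at the price of using the positive/negative-part calculus; the paper's argument uses only the subadditivity $u\vee v\le u+v$ for positive elements together with translation and scaling invariance of $\vee$, but pays for that economy of tools with a longer induction and more bookkeeping. Both are valid in an arbitrary vector lattice.
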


\begin{proof}
Let $k$ be the smallest integer satisfying $k(s-1)\ge 1$. We prove by induction that $y\ge j(s-1)x$ for $j=1,\dots,k$.

We have $x+y\ge x\vee y=sx$, and so $y\ge (s-1)x$.
Let $1\le j\le k-1$ and suppose that $y\ge j(s-1)x$.
Since $x\vee y=sx$, we have
$$
(x-j(s-1)x) \vee (y-j(s-1)x) = sx -j(s-1)x,
$$
and so
$$
x\vee \frac{y-j(s-1)x}{1-j(s-1)} =x\frac{s-j(s-1)}{1-j(s-1)}.
$$
By the induction assumption for $j=1$ applied to $x\vee y' =s'x$, where $y'=\frac{y-j(s-1)x}{1-j(s-1)}$ and $s'=\frac{s-j(s-1)}{1-j(s-1)}$ we have
$$
\frac{y-j(s-1)x}{1-j(s-1)}\ge x\Bigl(\frac{s-j(s-1)}{1-j(s-1)}-1\Bigr).
$$
By multiplying both sides with $1-j(s-1)$  we obtain
$$
y-j(s-1)x\ge (s-1)x
$$
and so
$$
y\ge (j+1)(s-1)x.
$$
By induction, $y\ge j(s-1)x$ for all $j=1,\dots,k$. Hence $y\ge k(s-1)x\ge x$ and
$y=x\vee y=sx$.
\end{proof}


\begin{proposition}
 Let $X$ be a vector lattice and  let $C\subset X_+$ be a nonzero max-cone. Let $A:C\to C$ be a positively homogeneous, finite suprema preserving mapping.
If $\al>0$ and $q(z)=\al+z$, then
$$
\si_p(q_\vee(A))\cap (\al,\infty) = q_\vee(\si_p(A))\cap(\al,\infty).
$$
\label{max_linear}
\end{proposition}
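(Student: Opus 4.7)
The plan is to observe that Lemma~\ref{good} is tailor-made for this situation, and the whole statement reduces to applying it after a simple rescaling. The inclusion $\supset$ is essentially free from Lemma~\ref{obvious_part}: if $t\in\si_p(A)$ and $q_\vee(t)=\max\{\al,t\}>\al$, then $q_\vee(t)\in\si_p(q_\vee(A))\cap(\al,\infty)$.

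For the nontrivial inclusion $\subset$, I would start by fixing $s\in\si_p(q_\vee(A))\cap(\al,\infty)$ and a nonzero $x\in C$ with
$$
q_\vee(A)x=\al x\vee Ax=sx.
$$
The target is to upgrade this identity to the genuine eigenvalue equation $Ax=sx$; once this is done, $s\in\si_p(A)$ and, since $q_\vee(s)=\max\{\al,s\}=s$, one gets $s\in q_\vee(\si_p(A))\cap(\al,\infty)$.

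The rescaling step is the whole point. Using that positive scalars commute with the lattice join in a vector lattice, dividing by $\al>0$ turns the eigenvalue identity into
$$
x\vee \bigl(\al^{-1}Ax\bigr)=\frac{s}{\al}\,x,
$$
where both $x$ and $\al^{-1}Ax$ belong to $X_+$, and where $s/\al>1$ because $s>\al$. These are precisely the hypotheses of Lemma~\ref{good} (with $s/\al$ in the role of its $s$, and $\al^{-1}Ax$ in the role of its $y$), and the conclusion gives $\al^{-1}Ax=(s/\al)x$, i.e.\ $Ax=sx$.

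I do not expect a genuine obstacle here: the heavy lifting has already been done inside Lemma~\ref{good}, and the only things to verify are that the scalar $\al$ can be pulled through the join, that $\al^{-1}Ax$ lies in $X_+$, and that the strict inequality $s>\al$ is used only to guarantee $s/\al>1$. The restriction to the interval $(\al,\infty)$ in the statement is consistent with this proof strategy: on $[0,\al]$ we would have $q_\vee(t)=\al$, and Lemma~\ref{good} would not apply, so no information about $\si_p(A)\cap[0,\al]$ can be recovered from $\si_p(q_\vee(A))\cap\{\al\}$.
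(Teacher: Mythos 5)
Your proposal is correct and follows essentially the same route as the paper: the inclusion $\supset$ from Lemma \ref{obvious_part}, and for $\subset$ the rescaling of $\al x\vee Ax=sx$ by $\al^{-1}$ followed by an application of Lemma \ref{good} with $y=\al^{-1}Ax$ and ratio $s/\al>1$ to conclude $Ax=sx$. The only difference is that you spell out the routine verifications (positivity of $\al^{-1}Ax$, commuting the scalar with the join, and $q_\vee(s)=s$) that the paper leaves implicit.
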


\begin{proof}
The inclusion $\supset$ follows from Lemma \ref{obvious_part}.

$\subset$: Let $t>\al$ and $t\in\si_p(q_\vee(A))$. So there exists a nonzero $x\in C$ with
$q_\vee(A)x= \al x\vee Ax=tx$. So  $x\vee \al^{-1}Ax=\al^{-1}tx$, where $\al^{-1}t>1$. By Lemma \ref{good}
for $y=\al^{-1}Ax$ we have
$\al^{-1}Ax=\al^{-1}tx$, and so $Ax=tx$. Hence $t\in\si_p(A)$.
\end{proof}
Now the following result follows. 
\begin{theorem}  Let $X$ be a vector lattice and  let $C\subset X_+$ be a nonzero max-cone. Let $A:C\to C$ be a positively homogeneous, finite suprema preserving mapping and
let $q(z)=\sum_{j=0}^n\al_jz^j\in\PP_+$. Then
$$
q_\vee(\si_p(A))\subset
\si_p(q_\vee(A))\subset
q_\vee(\si_p(A))\cup\{\al_0\}.
$$
\label{as_good_as_possible}
\end{theorem}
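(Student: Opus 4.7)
The first inclusion $q_\vee(\si_p(A)) \subset \si_p(q_\vee(A))$ is already Lemma \ref{obvious_part}, so the work is in proving $\si_p(q_\vee(A)) \subset q_\vee(\si_p(A)) \cup \{\al_0\}$. My plan is to reduce to the two preceding partial results: Corollary \ref{without_constant} (which handles polynomials with zero constant term) and Proposition \ref{max_linear} (which handles the one-variable polynomial $\al + z$ strictly above the threshold $\al$). The bridge between them is the separation of $q$ into its constant and non-constant parts.

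Write $q(z) = \al_0 + \tilde q(z)$ with $\tilde q(z) = \sum_{j=1}^n \al_j z^j \in \PP_+$, and set $B := \tilde q_\vee(A) \colon C \to C$. First check that $B$ inherits positive homogeneity and finite-suprema preservation from $A$, so that the earlier results apply with $B$ in place of $A$. Then for every $x \in C$,
$$
p_\vee(B)x = \al_0 x \vee Bx = \al_0 x \vee \tilde q_\vee(A)x = q_\vee(A)x,
$$
where $p(z) = \al_0 + z$. This factorization $q_\vee(A) = p_\vee(B)$ is the key reduction that allows me to split the argument into a ``constant part'' handled by Proposition \ref{max_linear} and a ``non-constant part'' handled by Corollary \ref{without_constant}.

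Now fix $s \in \si_p(q_\vee(A))$ with eigenvector $x \in C$, $x \ne 0$. If $\al_0 = 0$ then $q = \tilde q$ and Corollary \ref{without_constant} (applied directly when $\tilde q$ is non-zero; when $\tilde q \equiv 0$, $q_\vee(A) \equiv 0$ forces $s = 0 = \al_0$) finishes the job. So assume $\al_0 > 0$. From $q_\vee(A)x \ge \al_0 x$ and $q_\vee(A)x = sx$ we get $sx \ge \al_0 x$, hence $(\al_0 - s)x$ is simultaneously $\le 0$ and $\ge 0$; antisymmetry of the order on the pointed cone $X_+$ then forces $(\al_0 - s)x = 0$, and with $x \ne 0$ we conclude $s \ge \al_0$. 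The case $s = \al_0$ is absorbed into $\{\al_0\}$. For $s > \al_0$, Proposition \ref{max_linear} applied to $B$ and $p$ produces $r \in \si_p(B)$ with $s = \al_0 \vee r$, so $s = r$. Then Corollary \ref{without_constant} applied to the non-zero polynomial $\tilde q$ and $A$ furnishes $t \in \si_p(A)$ with $r = \tilde q_\vee(t)$, and since $\tilde q_\vee(t) = s > \al_0$ we also have $q_\vee(t) = \al_0 \vee \tilde q_\vee(t) = s$, placing $s \in q_\vee(\si_p(A))$.

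The main obstacle I anticipate is isolating the boundary value $s = \al_0$: the need for the extra $\{\al_0\}$ in the conclusion reflects the fact that an eigenvector of $q_\vee(A)$ with eigenvalue $\al_0$ need not come from any eigenvector of $A$ (its $\tilde q_\vee(A)$-image can be dominated by $\al_0 x$). The lattice-theoretic deduction $s \ge \al_0$ from $q_\vee(A)x \ge \al_0 x$, without appeal to any norm or Archimedean hypothesis, is what cleanly quarantines this degenerate contribution so that Proposition \ref{max_linear} can be applied on the complementary range $s > \al_0$.
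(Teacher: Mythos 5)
Your proof is correct and follows essentially the same route as the paper's: decompose $q$ into $\al_0$ plus its constant-free part $\tilde q$, dispose of $\al_0=0$ via Corollary \ref{without_constant}, note that every eigenvalue of $q_\vee(A)$ lies in $[\al_0,\infty)$, and for $s>\al_0$ strip off the constant term using Lemma \ref{good} --- you invoke it indirectly through Proposition \ref{max_linear} applied to $B=\tilde q_\vee(A)$, while the paper applies it directly to $tx=\al_0x\vee y$ with $y=\bigvee_{j=1}^n\al_jA^jx$ --- before concluding with Corollary \ref{without_constant}. The only (welcome) cosmetic difference is that you spell out the step $s\ge\al_0$, which the paper leaves implicit.
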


\begin{proof}
The first inclusion follows from Lemma \ref{obvious_part}.

If $\alpha_0=0$ then the second inclusion follows from Corollary \ref{without_constant}. Let $\alpha_0\ne 0$ and $t>\alpha_0$, $t\in\sigma_p(q_\vee(A))$. So there exists a nonzero $x\in C$ such that $tx=q_\vee(A)x=\alpha_0x\vee y$, where $y=\bigvee_{j=1}^n\alpha_jA^jx$. By Lemma \ref{good}, we have $y=tx$ and $t\in\sigma_p(\bigvee_{j=1}^n\alpha_jA^j)$. By Corollary \ref{without_constant}, there exists $s\in\sigma_p(A)$ with $t=\max\{\alpha_js^j:1\le j\le n\}=
\max\{\alpha_js^j:0\le j\le n\}=q_\vee(s)$. So $t\in q_\vee(\sigma_p(A))$.
\end{proof}

\begin{remark}{\rm Under the assumptions of Theorem \ref{as_good_as_possible}
it is possible for $q(z)=\sum_{j=0}^n\al_jz^j$ that $\si_p(q_\vee(A))\ne q_\vee( \si_p(A))$.
Consider the Banach lattice $\ell^\infty$ with natural order and let $C$ be the positive cone. Let $(e_n)$ be the standard basis in $\ell^\infty$ and define a mapping $A:C\to C$ by $A(\sum_n\gamma_ne_n)=\sum_nn^{-1}\gamma_ne_{n+1}$. 
Then $\sigma_p(A)=\emptyset$. Let $q_\vee(z)=1\vee z$. Then for $y=\sum_{n}e_n$ we have $q_\vee(A)y=y\vee Ay=y$. Hence  
$\si_p(q_\vee(A))=\{1\}\ne q_\vee(\si_p(A))=\emptyset$.
}
\end{remark}

\bigskip

In the following, $X$ will be a normed vector lattice and $C\subset X_+$ a non-zero max cone. Let $A:C\to C$ be positive homogeneous, Lipschitz and finite suprema preserving.
The spectral mapping theorem for the approximate point spectrum (see Theorem  \ref{main_spectral_mapping} below) 
 can be proved similarly as the above results by repeating similar arguments. However, one can also apply the following standard construction.

Denote by $\ell^\infty(X)$ the set of all bounded sequences $(x_j)_{j=1}^\infty$ of elements of $X$. With the norm $\|(x_j)\|=\sup_j \|x_j\|$ and order $(x_j)\le (y_j)\Leftrightarrow x_j\le y_j\hbox{ for all }j$, $\ell^\infty(X)$ is again a normed  vector lattice.
Let $C^\infty\subset \ell^\infty(X)$ be the set of all bounded sequences of elements of $C$ and let $A^\infty:C^\infty\to C^\infty$ be defined by $A^\infty((c_j))=(Ac_j)$. 
Let $c_0(X)$ be the set of all null sequences $(x_j)$ of elements of $X$, $\|x_j\|\to 0$.
Clearly $c_0(X)$ is an ideal in $\ell^\infty(X)$. Let $\widetilde X=\ell^\infty(X)/c_0(X)$. Then $\widetilde X$ is again a normed lattice. Let $\widetilde C=(C^\infty+c_0(X))/c_0(X)$
and $\widetilde A:\widetilde C\to\widetilde C$ be the natural quotient mapping, which is well defined since $A$ is Lipschitz. 
Then $\widetilde C$ is a max cone and $\widetilde A$ is positive homogeneous and finite suprema preserving. Moreover, it is easy to show that
$$
\si_{ap}(A)=\si_p(\widetilde A).
$$
Thus the following result follows.

\begin{theorem}  Let $X$ be a normed vector lattice and  let $C\subset X_+$ be a nonzero max-cone. Let $A:C\to C$ be a Lipschitz, positively homogeneous, finite suprema preserving mapping.
Let $q(z)=\sum_{j=0}^n\al_jz^j\in\PP_+$. Then
$$
q_\vee(\si_{ap}(A))\subset
\si_{ap}(q_\vee(A))\subset
q_\vee(\si_{ap}(A))\cup\{\al_0\}.
$$
If $\al_0=0$ then $q_\vee(\si_{ap}(A))=
\si_{ap}(q_\vee(A)).$
\label{almost_there}
\end{theorem}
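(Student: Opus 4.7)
The plan is to lift the statement to the quotient construction sketched right before the theorem, so that the approximate point spectrum of $A$ becomes the (ordinary) point spectrum of an auxiliary map $\widetilde A$ on a max-cone in a normed vector lattice, and then invoke Theorem \ref{as_good_as_possible}. The ambient space is $\ell^\infty(X)$ with coordinate-wise lattice operations and the sup norm, and $c_0(X)$ is the closed ideal of null sequences; the quotient $\widetilde X=\ell^\infty(X)/c_0(X)$ is therefore a normed vector lattice.

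My first task is to check that the objects introduced above the theorem statement behave as claimed. The set $\widetilde C=(C^\infty+c_0(X))/c_0(X)$ is a pointed convex subset of $\widetilde X_+$, and it is a max-cone because the coordinate-wise supremum of two bounded $C$-valued sequences lies in $C^\infty$ and, by Lemma \ref{Birk}, the class of this supremum depends only on the classes of the representatives. The formula $\widetilde A([c_j])=[Ac_j]$ is well-defined precisely because $A$ is Lipschitz: if $(c_j)-(c_j')\in c_0(X)$ then $\|Ac_j-Ac_j'\|\le L\|c_j-c_j'\|\to 0$. Positive homogeneity and preservation of finite suprema of $\widetilde A$ transfer coordinate-wise, the latter again via Lemma \ref{Birk}. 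One more compatibility must be recorded: for any $q\in\PP_+$ one has $q_\vee(\widetilde A)=\widetilde{q_\vee(A)}$, which is immediate coordinate-wise once representatives are chosen.

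The crucial identity is $\si_{ap}(A)=\si_p(\widetilde A)$. If $t\in\si_{ap}(A)$, pick $x_j\in C$ with $\|x_j\|=1$ and $\|Ax_j-tx_j\|\to 0$; then $[x_j]\in\widetilde C$ has norm $1$ (in particular is nonzero) and $\widetilde A[x_j]=t[x_j]$. Conversely, given a nonzero class $[x_j]\in\widetilde C$ with $\widetilde A[x_j]=t[x_j]$, one has $\|Ax_j-tx_j\|\to 0$ and $\limsup_j\|x_j\|>0$; passing to a subsequence with $\|x_j\|\ge\delta>0$ and renormalizing yields an approximate eigensequence, so $t\in\si_{ap}(A)$. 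Combining this with the compatibility of $q_\vee$ gives $\si_{ap}(q_\vee(A))=\si_p(q_\vee(\widetilde A))$, and applying Theorem \ref{as_good_as_possible} to $\widetilde A$ on $\widetilde C$ produces both inclusions of the theorem, with the refinement $\si_{ap}(q_\vee(A))=q_\vee(\si_{ap}(A))$ when $\al_0=0$ coming from Corollary \ref{without_constant}.

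The main obstacle I anticipate is verifying cleanly that $\widetilde C$ really is a max-cone in $\widetilde X_+$ and that $\widetilde A$ preserves finite suprema on $\widetilde C$: both are coordinate-wise statements, but they need Lemma \ref{Birk} (together with the Lipschitz constant in the case of $\widetilde A$) to confirm that the supremum built coordinate-wise is independent of the choice of representative modulo $c_0(X)$. Once this bookkeeping is done and the identification $\si_{ap}(A)=\si_p(\widetilde A)$ is in place, no further work is required beyond quoting Theorem \ref{as_good_as_possible} and Corollary \ref{without_constant}.
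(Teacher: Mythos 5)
Your proposal is correct and follows essentially the same route as the paper: the authors likewise pass to $\widetilde X=\ell^\infty(X)/c_0(X)$, use that $A$ being Lipschitz makes $\widetilde A$ well defined, identify $\si_{ap}(A)=\si_p(\widetilde A)$, and then invoke Theorem \ref{as_good_as_possible} (and Corollary \ref{without_constant} for the case $\al_0=0$). The bookkeeping you flag (that $\widetilde C$ is a max-cone and $\widetilde A$ preserves finite suprema, via Lemma \ref{Birk}) is exactly the content the paper leaves as "easy to show."
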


\begin{corollary}  Let $X$ be a normed vector lattice and  let $C\subset X_+$ be a nonzero max-cone. Let $A:C\to C$ be a  Lipschitz, positively homogeneous, finite suprema preserving mapping.
If $q\in\PP_+$, $q=\sum_{j=0}^{\deg q}\al_jz^j$, then
\be
 r (q_\vee (A))  =   q_\vee (r (A)). 
\label{eq_r_q}
\ee 
\label{sp_equality}
\end{corollary}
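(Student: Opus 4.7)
The plan is to combine the spectral mapping inclusions of Theorem \ref{almost_there} with the characterization of the cone spectral radius as the maximum of the approximate point spectrum provided by Theorem \ref{appl_to_matrices}, applied both to $A$ and to $q_\vee(A)$. Since Theorem \ref{appl_to_matrices} requires the mapping to be bounded, positively homogeneous, finite suprema preserving and Lipschitz, the first step is to verify that $q_\vee(A)$ inherits all of these properties from $A$. Positive homogeneity and preservation of finite suprema are straightforward: each $\alpha_j A^j$ is positively homogeneous and finite suprema preserving, and taking a finite pointwise supremum preserves both properties, since $(\alpha_j A^j x)\vee(\alpha_k A^k x)$ distributes over $x\vee y$ by the suprema-preserving property of $A$ and the associativity/commutativity of $\vee$. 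The Lipschitz property follows from Lemma \ref{Birk}: if $A$ has Lipschitz constant $L$ on $C$, then $\alpha_j A^j$ has Lipschitz constant $\alpha_j L^j$, and Lemma \ref{Birk} bounds the norm of the difference of suprema by the sum of the norms of the differences, so $q_\vee(A)$ has Lipschitz constant $\sum_{j=0}^{\deg q}\alpha_j L^j$.

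Having verified the hypotheses, I would apply Theorem \ref{appl_to_matrices} to obtain $r(A)=\max\si_{ap}(A)$ and $r(q_\vee(A))=\max\si_{ap}(q_\vee(A))$; in particular $\si_{ap}(A)$ is non-empty. Now by Theorem \ref{almost_there},
\[
q_\vee(\si_{ap}(A))\subset\si_{ap}(q_\vee(A))\subset q_\vee(\si_{ap}(A))\cup\{\alpha_0\}.
\]
Since $q_\vee$ is non-decreasing on $[0,\infty)$, the first inclusion gives
\[
q_\vee(r(A))=q_\vee(\max\si_{ap}(A))=\max q_\vee(\si_{ap}(A))\le\max\si_{ap}(q_\vee(A))=r(q_\vee(A)).
\]
For the reverse inequality, the second inclusion yields
\[
r(q_\vee(A))=\max\si_{ap}(q_\vee(A))\le\max\bigl(q_\vee(\si_{ap}(A))\cup\{\alpha_0\}\bigr)=\max\{q_\vee(r(A)),\alpha_0\},
\]
and since $q_\vee(r(A))=\max_j\alpha_j r(A)^j\ge\alpha_0$, the right-hand side equals $q_\vee(r(A))$, establishing (\ref{eq_r_q}).

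The only subtle point is ensuring $q_\vee(A)$ really falls under the hypotheses of Theorem \ref{appl_to_matrices}; beyond that, the argument is purely a bookkeeping exercise using monotonicity of $q_\vee$ and the fact that $q_\vee(t)\ge\alpha_0$ for every $t\ge 0$, which absorbs the extra point $\{\alpha_0\}$ appearing in the spectral mapping inclusion. Note also that $\si_{ap}(A)$ is guaranteed to be non-empty (it contains $r(A)$), so $q_\vee(\si_{ap}(A))$ is non-empty and the maxima above are well defined.
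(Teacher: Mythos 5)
Your proposal is correct and follows essentially the same route as the paper: combining the inclusions of Theorem \ref{almost_there} with $r=\max\si_{ap}$ from Theorem \ref{appl_to_matrices} and absorbing the extra point $\{\al_0\}$ via $q_\vee(r(A))\ge\al_0$. The only difference is that you explicitly verify that $q_\vee(A)$ inherits the Lipschitz, positive homogeneity and suprema-preserving properties, which the paper leaves implicit; that verification is accurate and harmless.
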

\begin{proof} 

By Theorems \ref{almost_there} and \ref{appl_to_matrices} 
it follows that
$$q_\vee (r (A)) \le   r (q_\vee (A))  \le  \max\{ q_\vee (r(A)) ,\alpha_0\} = q_\vee (r (A)),$$
which completes the proof.
\end{proof}

The equality (\ref{eq_r_q})  suggests that
the situation for the approximate point spectrum is even better. Indeed, the equality $\si_{ap}(q_\vee(A))=q_\vee(\si_{ap}(A))$ is true for all polynomials from $\PP _+$ as we prove below in Theorem \ref{main_spectral_mapping}.

\begin{proposition} Let $X$ be a normed vector lattice and  let $C\subset X_+$ be a nonzero max-cone. Let $A:C\to C$ be a  Lipschitz, positively homogeneous, finite suprema preserving mapping.
Let $q(z)=\al+z\in\PP_+$. Then $\si_{ap}(q_\vee(A))=q_\vee(\si_{ap}(A))$.
\label{first}
\end{proposition}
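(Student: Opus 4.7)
The plan is to prove both inclusions separately, using Theorem \ref{almost_there} to reduce the harder direction to a single boundary case. For the inclusion $q_\vee(\si_{ap}(A)) \subset \si_{ap}(q_\vee(A))$, pick $t\in\si_{ap}(A)$ and an approximate eigenvector sequence $(x_n)\subset C$ with $\|x_n\|=1$ and $\|Ax_n-tx_n\|\to 0$. Since $x_n\ge 0$, we have $q_\vee(t)x_n=\alpha x_n\vee tx_n$, so Birkhoff's inequality (Lemma \ref{Birk}) yields
$$\|q_\vee(A)x_n-q_\vee(t)x_n\|=\|\alpha x_n\vee Ax_n-\alpha x_n\vee tx_n\|\le\|Ax_n-tx_n\|\to 0,$$
which shows $q_\vee(t)\in\si_{ap}(q_\vee(A))$.

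For the reverse inclusion, Theorem \ref{almost_there} already gives $\si_{ap}(q_\vee(A))\subset q_\vee(\si_{ap}(A))\cup\{\alpha\}$, so it suffices to show that if $\alpha\in\si_{ap}(q_\vee(A))$ then $\alpha\in q_\vee(\si_{ap}(A))$. Because $q_\vee(s)=\max\{\alpha,s\}$ for $s\ge 0$, this amounts to showing that $\si_{ap}(A)$ meets $[0,\alpha]$; by Theorem \ref{dAisMin} we have $d(A)\in\si_{ap}(A)$, so it is in fact enough to prove the numerical estimate $d(A)\le\alpha$.

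To establish this estimate I would compare the lower spectral radii of $A$ and $q_\vee(A)$. A direct induction using positive homogeneity and preservation of finite suprema gives $q_\vee(A)^nx\ge A^nx$ and $q_\vee(A)^nx\ge\alpha^n x$ for every $x\in C$ and $n\in\NN$; monotonicity of the norm on $X_+$ (inherent to a normed vector lattice) then implies $m(q_\vee(A)^n)\ge\max\{m(A^n),\alpha^n\}$, whence, after taking $n$-th roots and passing to the limit, $d(q_\vee(A))\ge\max\{d(A),\alpha\}$. On the other hand, Lemma \ref{Birk} shows that $q_\vee(A)$ is itself Lipschitz (with constant $\alpha+L$, where $L$ is a Lipschitz constant for $A$), so Proposition \ref{basic} applies and the assumption $\alpha\in\si_{ap}(q_\vee(A))$ forces $\alpha\ge d(q_\vee(A))\ge d(A)$, exactly as needed.

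I do not anticipate any serious obstacle: once Theorem \ref{almost_there} is invoked, the only remaining content is the trivial pointwise comparison giving $d(q_\vee(A))\ge\max\{d(A),\alpha\}$, combined with Theorem \ref{dAisMin} to produce an approximate eigenvalue of $A$ in $[0,\alpha]$. The mild subtlety is noticing that Theorem \ref{dAisMin} is exactly what bridges the lower-bound estimate and the desired spectral membership; without it one could only bound $d(A)$, not extract an actual element of $\si_{ap}(A)$ at or below $\alpha$.
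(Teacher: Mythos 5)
Your proposal is correct and follows essentially the same route as the paper: both reduce, via Theorem \ref{almost_there}, to showing that $\alpha\in\si_{ap}(q_\vee(A))$ forces $d(A)\le\alpha$, and then conclude from $d(A)\in\si_{ap}(A)$ (Theorem \ref{dAisMin}) that $\alpha=q_\vee(d(A))\in q_\vee(\si_{ap}(A))$. The only cosmetic difference is that you obtain $d(A)\le\alpha$ by combining Proposition \ref{basic} for $q_\vee(A)$ with the comparison $d(q_\vee(A))\ge d(A)$, whereas the paper estimates $m(A^j)\le m(q_\vee(A)^j)\le\alpha^j$ directly from an approximate eigenvector sequence --- the same computation in different packaging.
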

\begin{proof}
If $\al=0$ then the statement is clear.

Let $\al\ne 0$. Without loss of generality we may assume that $\al=1$. Let $B=I\vee A$.

We know that
$\si_{ap}(B)\supset q_\vee(\si_{ap}(A))$ and
$$
\si_{ap}(B)\cap (1,\infty)=q_\vee(\si_{ap}(A))\cap(1,\infty).
$$
Moreover, $m(B)\ge 1$ and $\si_{ap}(B)\subset [1,\infty)$.

Let $1\in\si_{ap}(B)$. For each $j\in\NN$ we have $1\in\si_{ap}(B^j)$. Let $(x_k)$ be a sequence of unit vectors in $C$ satisfying
$\|B^jx_k-x_k\|\to 0$. Thus $m(B^j)\le 1$ and $m(A^j)\le m(B^j)\le 1$. Hence $d(A)=\lim m(A^j)^{1/j}\le 1$
and so  $q_\vee(d(A))=1$. Since $d(A)\in\si_{ap}(A)$ by Theorem \ref{dAisMin} it follows that $1\in q_\vee(\si_{ap}(A))$ and so
$\si_{ap}(q_\vee(A))=q_\vee(\si_{ap}(A))$.
\end{proof}
Now the following result follows from Theorem \ref{almost_there}, Proposition \ref{first} and Theorem  \ref{dAisMin}.
\begin{theorem}  Let $X$ be a normed vector lattice and  let $C\subset X_+$ be a nonzero max-cone. Let $A:C\to C$ be a  Lipschitz, positively homogeneous, finite suprema preserving mapping.
Let $q(z)=\sum_{j=0}^n \al_jz^j\in\PP_+$. Then
$$
\si_{ap}(q_\vee(A))=q_\vee(\si_{ap}(A))
$$
and so 
$$
d(q_\vee(A))=q_\vee(d(A)).
$$
\label{main_spectral_mapping}
\end{theorem}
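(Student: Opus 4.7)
The plan is to reduce to the two cases already settled in this section: polynomials with no constant term (handled by Theorem \ref{almost_there}) and the two-term polynomial $\alpha+z$ (handled by Proposition \ref{first}). Concretely, I would split $q(z)=\alpha_0+\tilde q(z)$ with $\tilde q(z)=\sum_{j=1}^n\alpha_j z^j$, introduce the auxiliary operator $B:=\tilde q_\vee(A)$, and exploit the identity $q_\vee(A)=\alpha_0 I\vee B=p_\vee(B)$ for $p(z)=\alpha_0+z$, so that Proposition \ref{first} can be applied to $B$ while Theorem \ref{almost_there} is applied to $A$ with the polynomial $\tilde q$.

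First I would verify that $B$ again satisfies the hypotheses of the theorem. It is clearly positively homogeneous; it preserves finite suprema, because each $A^j$ does and the computation $B(x\vee y)=\bigvee_j\alpha_j(A^j x\vee A^j y)=Bx\vee By$ goes through by the associativity and commutativity of $\vee$; and it is Lipschitz by Lemma \ref{Birk} applied to the finitely many Lipschitz mappings $\alpha_j A^j$, whose individual Lipschitz constants are bounded by $\alpha_j L^j$ with $L$ the Lipschitz constant of $A$. With this in hand, Theorem \ref{almost_there} applied to $\tilde q$ (which has zero constant term) yields
$$\si_{ap}(B)=\tilde q_\vee(\si_{ap}(A)),$$
and Proposition \ref{first} applied to the mapping $B$ with $p(z)=\alpha_0+z$ yields
$$\si_{ap}(q_\vee(A))=\si_{ap}(p_\vee(B))=p_\vee(\si_{ap}(B)).$$
Chaining these two identities and using the pointwise equality $p_\vee(\tilde q_\vee(t))=\max\{\alpha_0,\max_{1\le j\le n}\alpha_j t^j\}=q_\vee(t)$ for every $t\ge 0$ then gives the first assertion $\si_{ap}(q_\vee(A))=q_\vee(\si_{ap}(A))$.

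For the second assertion I would apply Theorem \ref{dAisMin} to both $A$ and $q_\vee(A)$; the latter again inherits the Lipschitz and finite-suprema-preserving properties by the same verification as for $B$. This gives $d(A)=\min\si_{ap}(A)$ and $d(q_\vee(A))=\min\si_{ap}(q_\vee(A))$. Since $q_\vee$ is non-decreasing on $[0,\infty)$ (being a finite maximum of the monotone functions $t\mapsto\alpha_j t^j$), the spectral mapping identity just established forces
$$d(q_\vee(A))=\min q_\vee(\si_{ap}(A))=q_\vee(\min\si_{ap}(A))=q_\vee(d(A)).$$

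I do not expect a conceptual obstacle: the argument is essentially an assembly of the preceding results. The only places needing genuine care are (a) checking that $B=\tilde q_\vee(A)$ and $q_\vee(A)$ inherit the Lipschitz and finite-suprema-preserving hypotheses, so that Proposition \ref{first} and Theorem \ref{dAisMin} may legitimately be applied to them, and (b) the elementary but crucial bookkeeping $q_\vee(t)=\max\{\alpha_0,\tilde q_\vee(t)\}$, which is precisely the factorisation that allows Proposition \ref{first} to absorb the constant term $\alpha_0$ on top of Theorem \ref{almost_there}.
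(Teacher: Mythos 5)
Your proposal is correct and follows essentially the same route as the paper, which derives the theorem precisely by combining Theorem \ref{almost_there} (for the constant-free part $\tilde q$), Proposition \ref{first} (to absorb the constant term via $q_\vee(A)=\al_0 I\vee \tilde q_\vee(A)$), and Theorem \ref{dAisMin} (to convert the set identity into $d(q_\vee(A))=q_\vee(d(A))$ using monotonicity of $q_\vee$). Your explicit verification that $B=\tilde q_\vee(A)$ inherits the Lipschitz and suprema-preserving hypotheses, via Lemma \ref{Birk}, is exactly the bookkeeping the paper leaves implicit.
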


In particular, the results above apply to the following two classes of examples from \cite{MN02} and \cite{MP17}.

\begin{example} 
\label{max_cont}
{\rm  Given $a>0$, consider the following
max-type kernel operators $A: C[0,a] \to C[0,a]$ of the form
$$(A(x))(s)=\max_ {t\in [\alpha (s), \beta (s)]}{k(s,t)x(t)},$$
where $x\in C[0,a]$ and $\alpha, \beta:[0,a]\to[0,a]$ are given continuous functions satisfying $\alpha \le \beta$.
The kernel  $k: S \to [0, \infty)$ is a given non-negative continuous function, where $S$
 denotes the compact set
$$S=\{(s,t)\in [0,a]\times[0,a]: t \in [\alpha (s), \beta (s)]\}.$$
It is clear that for $C=C_+ [0,a]$ it holds $AC \subset C$. We will denote the restriction $A|_C$ again by $A$.
The eigenproblem of these operators arises in the study of periodic solutions of a class of 
differential-delay equations
$$\varepsilon y^{\prime}(t)=g(y(t),y(t-\tau)), \quad \tau=\tau(y(t)),$$
with state-dependent delay (see e.g. \cite{MN02}). 

By  \cite[Proposition 4.8]{MN02} 
and its proof, the operator $A:C \to C$ is a positively homogeneous, Lipschitz mapping that preserves finite suprema. Hence 
$r(A)= \max \{t : t \in \sigma_{ap}(A)\}$ and $d(A)= \min \{t : t \in \sigma_{ap}(A)\}$. By  \cite[Theorem 4.3]{MN02} 
it also holds that 
$r(A)= \lim _{n \to \infty} b_n ^{1/n} =\inf _{n\ge 1} b_n ^{1/n}$, where
$b_n =\|A^n\|= \max _{\sigma \in S_n} k_n (\sigma)$,
$$k_n  (\sigma)=k(s_0, s_1)k(s_1, s_2) \cdots k (s_{n-1}, s_n) $$
and
$$S_n =\bigl\{(s_0, s_1,s_2,  \ldots , s_n): s_0 \in [0,a], s_i \in [\alpha(s_{i-1}), \beta (s_{i-1})], i=1,2, \ldots , n \bigr\}.$$ 
On the other hand,
$$d(A)=\lim_{n\to\infty}d_n ^{1/n}= \sup _{n\in N} d_n ^{1/n},$$ 
where
$$d_n = m(A^n) = \inf\bigl\{ \max _{\sigma \in S_n} k_n (\sigma) x(s_n) :x\in C, \|x\|=1\bigr\}.$$

}
\end{example}

We also point out the following related example from \cite{MP17}.

\begin{example} 
\label{sup_bounded}
{\rm
Let $M$ be a nonempty set and let $X$ be the set of all bounded real functions on $M$.
With the norm $\|f\|_{\infty}=\sup\{|f(t)|:t\in M\}$ and natural operations, $X$ is a normed vector lattice.
Let $C=X_+$ 
and
let $k:M\times M\to [0,\infty)$ satisfy $\sup \bigl\{k(t,s):t,s\in M\bigr\}<\infty$.
Let $A:C\to C$ be defined by $(Af)(s)=\sup\{k(s,t)f(t):t\in M\}$ and so $\|A\|=\sup \bigl\{k(t,s):t,s\in M\bigr\}$. Clearly $C$ is a max-cone, $A$ is bounded, positive homogeneous and preserves finite  suprema. 
Moreover, $A$ is Lipschitz. So 
we have that $r(A)= \max \{t : t \in \sigma_{ap}(A)\}$ and  $d(A)= \min \{t : t \in \sigma_{ap}(A)\}$.

In particular, if $M$ is the set of all natural numbers $\NN$, our results apply to infinite bounded non-negative matrices $k=[k(i,j)]$ (i.e., $k(i,j) \ge 0$ for all $i,j \in \NN$ and    $\|k\|_{\infty}=\sup _{i,j \in \NN} k(i,j) < \infty$). In this case, $X= l^{\infty}$ and 
$C=l^{\infty}_+ $ and $\|A\| =\|k\|_{\infty}$.
}
\end{example}

\section{Application to inequalities involving Hadamard products.}

Throughout this section let $X$, $C$ and all the mappings  $A, B, A_1, \ldots, A_m, A_{11}, \ldots , A_{mn}$  that map $C$ to $C$ be as in Example \ref{max_cont} (where the functions $\alpha$ and $\beta$ are fixed - the same for all operators  $A, B, A_1, \ldots, A_m, A_{11}, \ldots , A_{mn}$) or let $X$, $C$ and  all the mappings  $A, B, A_1, \ldots, A_m, A_{11}, \ldots , A_{mn}$  that map $C$ to $C$ be as in Example \ref{sup_bounded}. We  denote the set of such mappings by $\mathcal{C}$.  

In this section we apply  (\ref{eq_r_q})
 to prove some new inequalities on Hadamard products (Theorem \ref{powerineq}) by applying an idea from \cite{DP16} .  
Let $A\circ B$ denote the Hadamard (or Schur) product of mappings $A$ and $B$ from $\mathcal{C}$, i.e., 
$A\circ B \in \mathcal{C}$ is a mapping with a kernel $k(s,t)h(s,t)$, where $k$ and $h$ are the kernels of $A$ and $B$, respectively.
Similarly, for $\gamma >0$ let $A^{(\gamma)}$ denote the Hadamard (or Schur) power of $A$, i.e., a mapping with a kernel $k^{\gamma}(s,t)$. 

The following result was stated in \cite[Theorem 4.1]{P12} 
in the special case of $n\times n$ non-negative matrices and was essentially proved in \cite{P06}. 
It follows from \cite[Theorem 5.1 and Remark 5.2]{P06} 
and the fact that for $A_1, \ldots, A_m, A \in \mathcal{C}$ and $\gamma>0$ we have
$$A_1 ^{(\gamma)} \cdots   A_m ^{(\gamma)}=(A_1  \cdots   A_m) ^{(\gamma)} \;\;\mathrm{and}\;\; \|A^{(\gamma)}\| = \|A\|^{\gamma} $$
and consequently $r (A^{(\gamma)}) = r (A)^{\gamma}$.  
Observe also that $A\le B$ implies $r(A)\le r(B)$.

\begin{theorem} Let $A_{i j} \in \mathcal{C}$ for $i=1, \ldots , n$ and  $j=1, \ldots , m$  and let
$\alpha _1$, $\alpha _2$,..., $\alpha _m$ be positive numbers. 
Then we have

$$\left(A_{1 1}^{(\alpha _1)} \circ A_{1 2}^{(\alpha _2)} \circ \cdots \circ A_{1 m}^{(\alpha _m)}\right)  \ldots 
\left(A_{n 1}^{(\alpha _1)} \circ A_{n 2}^{(\alpha _2)} \circ \cdots \circ A_{n m}^{(\alpha _m)}\right)$$ 
$$\le (A_{1 1}  \cdots   A_{n 1})^{(\alpha _1)} \circ (A_{1 2}  \cdots   A_{n 2})^{(\alpha _2)} \circ \cdots 
\circ (A_{1 m}  \cdots  A_{n m})^{(\alpha _m)} $$
and

$$r \left(\left(A_{1 1}^{(\alpha _1)} \circ \cdots \circ A_{1 m}^{(\alpha _m)}\right) \ldots 
\left(A_{n 1}^{(\alpha _1)} \circ \cdots \circ A_{n m}^{(\alpha _m)} \right) \right)$$
\be
\label{spectral2}
\le r  \left( A_{1 1}\cdots  A_{n 1} \right)^{\alpha _1} \cdots 
r \left( A_{1 m} \cdots   A_{n m}\right)^{\alpha _m} .
\ee
\label{DPmax}
\end{theorem}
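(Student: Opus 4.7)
My plan is to derive both inequalities at the level of the kernels of these max-type operators, using repeatedly the elementary fact that for nonnegative functions one has $\sup\prod\le\prod\sup$ and that the supremum commutes with the map $x\mapsto x^\gamma$ for $\gamma>0$.

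First I would verify the two auxiliary identities stated in the paragraph preceding the theorem: for any $A_1,\ldots,A_m,A\in\mathcal{C}$ and $\gamma>0$,
$$A_1^{(\gamma)}\cdots A_m^{(\gamma)}=(A_1\cdots A_m)^{(\gamma)}\qquad\text{and}\qquad\|A^{(\gamma)}\|=\|A\|^\gamma.$$
Both are immediate kernel computations: the composition of two operators in $\mathcal{C}$ with kernels $k,h$ has kernel $\max_u k(s,u)h(u,t)$, and raising a supremum of nonnegative numbers to a positive power commutes with the supremum. In particular $\|(A^{(\gamma)})^n\|=\|(A^n)^{(\gamma)}\|=\|A^n\|^\gamma$, whence $r(A^{(\gamma)})=r(A)^\gamma$.

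For the first (pointwise) operator inequality, let $k_{ij}$ denote the kernel of $A_{ij}$. Writing out the kernels of both sides at a pair $(s_0,s_n)$, the left-hand side evaluates to
$$\max_{s_1,\ldots,s_{n-1}}\;\prod_{j=1}^m\prod_{i=1}^n k_{ij}(s_{i-1},s_i)^{\alpha_j},$$
while the right-hand side evaluates to
$$\prod_{j=1}^m\left(\max_{s_1^{(j)},\ldots,s_{n-1}^{(j)}}\;\prod_{i=1}^n k_{ij}(s_{i-1}^{(j)},s_i^{(j)})\right)^{\alpha_j}.$$
The first expression forces all $m$ factors to share a common intermediate path, while the second optimizes each path independently, so the pointwise inequality is immediate (each admissible common path is feasible simultaneously for every $j$).

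For the spectral inequality, set $B_i=A_{i1}^{(\alpha_1)}\circ\cdots\circ A_{im}^{(\alpha_m)}$ and $C_j=A_{1j}\cdots A_{nj}$. Monotonicity of $r$ with respect to the pointwise order, applied to the first inequality, reduces the claim to
$$r\bigl(C_1^{(\alpha_1)}\circ\cdots\circ C_m^{(\alpha_m)}\bigr)\le r(C_1)^{\alpha_1}\cdots r(C_m)^{\alpha_m}.$$
I would prove this by invoking the first inequality again, this time with $A_{ij}$ replaced by $C_j$ (so each column is the same operator) and $n$ replaced by a large $N$, which yields
$$\bigl(C_1^{(\alpha_1)}\circ\cdots\circ C_m^{(\alpha_m)}\bigr)^N\le(C_1^N)^{(\alpha_1)}\circ\cdots\circ(C_m^N)^{(\alpha_m)}.$$
Taking norms and using the elementary estimate $\|H_1^{(\alpha_1)}\circ\cdots\circ H_m^{(\alpha_m)}\|\le\prod_j\|H_j\|^{\alpha_j}$ (once more from $\sup\prod\le\prod\sup$ for nonnegative kernels) gives $\|(C_1^{(\alpha_1)}\circ\cdots\circ C_m^{(\alpha_m)})^N\|\le\prod_j\|C_j^N\|^{\alpha_j}$. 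Taking $N$-th roots and sending $N\to\infty$, the Bonsall formula for $r$ yields the required bound. No substantive obstacle is anticipated, as the entire argument is a kernel computation; the only care needed is treating Examples \ref{max_cont} and \ref{sup_bounded} uniformly, which their common max-type kernel structure permits.
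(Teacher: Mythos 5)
Your proposal is correct, and it is in fact more self-contained than what the paper offers: the paper does not prove Theorem \ref{DPmax} at all, but derives it from \cite[Theorem 5.1 and Remark 5.2]{P06} together with the two identities $A_1^{(\gamma)}\cdots A_m^{(\gamma)}=(A_1\cdots A_m)^{(\gamma)}$, $\|A^{(\gamma)}\|=\|A\|^{\gamma}$ (hence $r(A^{(\gamma)})=r(A)^{\gamma}$) and the monotonicity of $r$. Your kernel-level argument supplies exactly the content that is outsourced to \cite{P06}: the pointwise inequality is the observation that a common intermediate path is simultaneously feasible for every factor $j$ (which is legitimate in Example \ref{max_cont} precisely because $\alpha$ and $\beta$ are fixed across all operators, so the constraint sets $S_n$ coincide), and the spectral inequality follows by the standard bootstrap of applying the pointwise inequality to the $N\times m$ array with identical rows, using $\|H_1^{(\alpha_1)}\circ\cdots\circ H_m^{(\alpha_m)}\|\le\prod_j\|H_j\|^{\alpha_j}$ and passing to the limit in the Gelfand formula. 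This is the same mechanism used in the cited sources, and it correctly explains why no normalization condition on $\sum_j\alpha_j$ is needed in the max-algebra setting (the estimate $\sup\prod\le\prod\sup$ requires none, unlike the H\"older-type estimates for classical integral kernels). The only small things worth making explicit in a write-up are that $A\le B$ implies $\|A\|\le\|B\|$ (used before taking $N$-th roots) and that $\|A\|$ equals the supremum of the kernel over the admissible set, both of which are immediate for the classes in Examples \ref{max_cont} and \ref{sup_bounded}.
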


\begin{remark} {\rm An analogue to (\ref{spectral2}) for $\|\cdot\|$ also holds. 
As pointed out in 
\cite{P06}, \cite{P12} and \cite{MP12}, 
Theorem  \ref{DPmax}
 is in fact a result on the generalized  and joint spectral radius in max-algebra (see also \cite{P16a}). The logarithm of the latter is also known as the maximal Lyapunov exponent in max algebra and is important in the study of certain discrete event systems (see e.g. the references cited in \cite{MP12}). 
}
\end{remark}

 Inequalities (\ref{maxmixmax}) and  (\ref{kvmax}) 
below were established in  \cite[Corollary 4.8]{P12}
 in the special case of $n \times n$ matrices,
while the inequality (\ref{products}) 
is a max-algebra version of \cite[Corollary 3.3]{P16b} and \cite[Theorem 3.2]{DP16}.
 The proofs of these inequalities are similar to the proofs of the results from  \cite{P12}, \cite{DP16} and \cite{P16b} 
and are included for the convenience of readers. In the proof we use the fact that 
\be
 r (A B) =  r (B A).
\label{commute} 
\ee

\begin{corollary}
\label{max_ineq}

Let  $A_1, \ldots, A_m, A, B \in \mathcal{C}$ and let 
$P_i =A_i  A_{i+1}  \cdots  A_m  A_1  \cdots  A_{i-1}$ for $i=1, \ldots, m$.  
Then the following inequalities hold:
\be
r (A _1  \circ \cdots \circ A _m) \le r (P_1  \circ \cdots \circ P_m)^{1/m} \le r (A_1   \cdots  A_m),
\label{products}
\ee
\be
r (A \circ B) \le r (A  B \circ B  A)^{1/2}\le  r (A  B),
\label{maxmixmax}
\ee
\be
r (A  B \circ B A) \le r (A ^2  B ^2),
\label{kvmax}
\ee

\label{Hadamard}
\end{corollary}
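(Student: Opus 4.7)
The plan is to derive all three inequalities as direct applications of Theorem \ref{DPmax}, using only three standard facts: commutativity of the Hadamard product $\circ$; cyclic invariance (\ref{commute}), which iterates to $r(X_1\cdots X_n)=r(X_{\sigma(1)}\cdots X_{\sigma(n)})$ for any cyclic permutation $\sigma$; and monotonicity of $r$ with respect to the pointwise order on $\mathcal{C}$, together with $r(X^m)=r(X)^m$.

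For the right-hand bound in (\ref{products}) I would apply the spectral inequality (\ref{spectral2}) with $n=1$, $\alpha_j=1$, and $A_{1j}=P_j$. Cyclic invariance makes each $r(P_j)$ equal to $r(A_1\cdots A_m)$, so the inequality reduces to $r(P_1\circ\cdots\circ P_m)\le r(A_1\cdots A_m)^m$, and taking $m$-th roots finishes this half. For the left-hand bound I would instead invoke the \emph{operator} form of Theorem \ref{DPmax} with $n=m$, $\alpha_j=1$, and the cyclic tableau $A_{ij}:=A_{(i+j-1)\bmod m}$, where indices are understood in $\{1,\dots,m\}$. Commutativity of $\circ$ forces every row Hadamard product $A_{i1}\circ\cdots\circ A_{im}$ to collapse to $A_1\circ\cdots\circ A_m$, while the $j$-th column ordered product $A_{1j}A_{2j}\cdots A_{mj}$ is by construction exactly $P_j$. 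The operator inequality therefore becomes
\[
(A_1\circ\cdots\circ A_m)^m\le P_1\circ\cdots\circ P_m,
\]
and applying $r$ together with $r(X^m)=r(X)^m$ yields the desired bound. Inequality (\ref{maxmixmax}) is then immediate as the case $m=2$, $A_1=A$, $A_2=B$ of (\ref{products}), for which $P_1=AB$ and $P_2=BA$.

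For (\ref{kvmax}) the plan is to apply (\ref{spectral2}) once more, now with $n=m=2$, $\alpha_1=\alpha_2=1$, and the criss-cross choice $A_{11}=AB$, $A_{21}=BA$, $A_{12}=BA$, $A_{22}=AB$. Commutativity of $\circ$ makes both row Hadamard products equal to $AB\circ BA$, so the left-hand side of (\ref{spectral2}) becomes $r((AB\circ BA)^2)=r(AB\circ BA)^2$; the column ordered products are $ABBA$ and $BAAB$, and cyclic invariance of $r$ gives $r(ABBA)=r(BAAB)=r(A^2B^2)$, so the right-hand side is $r(A^2B^2)^2$. Taking square roots proves (\ref{kvmax}).

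The only substantive step in all of the above is choosing the tableaux $(A_{ij})$ correctly: the cyclic shift in the left half of (\ref{products}) and the criss-cross pairing $(AB,BA;BA,AB)$ in (\ref{kvmax}) are engineered so that commutativity of $\circ$ collapses the row products to a single operator, while cyclic invariance of $r$ simultaneously collapses the spectral radii of the column products onto the single target $r(A_1\cdots A_m)$ or $r(A^2B^2)$. Once these tableaux are identified, the three inequalities become essentially one-line consequences of Theorem \ref{DPmax}; I do not anticipate any serious technical obstacles beyond locating this correct combinatorial arrangement.
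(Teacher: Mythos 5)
Your proof is correct and follows essentially the same route as the paper: the left half of (\ref{products}) via the operator inequality of Theorem \ref{DPmax} applied to the cyclically shifted tableau together with monotonicity of $r$ and $r(X^m)=r(X)^m$, the right half via (\ref{spectral2}) with $n=1$ and cyclic invariance (\ref{commute}), and (\ref{maxmixmax}) as the case $m=2$. The only cosmetic difference is in (\ref{kvmax}): the paper deduces it by applying (\ref{maxmixmax}) to the pair $(AB,BA)$ and then using (\ref{commute}), whereas you re-invoke (\ref{spectral2}) with the criss-cross tableau $(AB,BA;BA,AB)$ --- the two amount to the same computation.
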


\begin{proof} 
 
By Theorem \ref{DPmax} and (\ref{commute})
 we have
$$r (A _1  \circ \cdots \circ A _m)^m = r\left( (A _1  \circ \cdots \circ A _m)^m  \right)$$
$$=  r\left( (A _1  \circ \cdots \circ A _m)  (A _2  \circ \cdots \circ A _m \circ A_1) \cdots (A _m  \circ A_1 \cdots \circ A _{m-1})\right)$$ 
$$\le r (P_1  \circ \cdots \circ P_m) \le  r (P_1)  \cdots  r (P_m) =  r(A_1  \cdots  A_m)^m , $$
which proves  (\ref{products}).

Inequality (\ref{maxmixmax}) is a special case of (\ref{products}), while (\ref{kvmax}) follows from (\ref{maxmixmax}) and (\ref{commute}).
\end{proof}

\begin{remark}
{\rm
 As pointed out in \cite[Example 4.10]{P12} 
the inequalities in  (\ref{products}) 
are sharp and may be strict, and in some cases the inequality (\ref{kvmax}) 
may be better than (\ref{maxmixmax}).
}
\end{remark}

If $m\in \NN$ and $q\in\PP_+$, $q=\sum_{j=0}^{\deg q}\al_jz^j$ let us define the polynomial $q^{[m]}$ by
$q^{[m]}=\sum_{j=0}^{\deg q}\al_j ^m z^j$.

By applying   (\ref{eq_r_q}) 
 and an idea from \cite{DP16}, 
we extend Corollary \ref{Hadamard} 
in the following way.

\begin{theorem}
Let $q\in\PP_+$, $q=\sum_{j=0}^{\deg q}\al_jz^j$ and $A_1, \ldots, A_m, A, B \in \mathcal{C}$. If $P_i$ for $i=1, \ldots ,m$ are as in Corollary \ref{max_ineq}, 
then the following inequalities hold:

\be
 r (q_\vee (A _1  \circ \cdots \circ A _m)) \le r (q_\vee ^{[m]}(P_1  \circ \cdots \circ P_m))^{1/m} \le r (q_\vee (A_1   \cdots  A_m)), 
\label{Pji}
\ee
\be
r (q_\vee (A \circ B)) \le r (q_\vee ^{[2]}(A B \circ B A))^{1/2}\le  r (q_\vee (A B)),
\label{m=2}
\ee

\be
r (q_\vee (A B \circ B A)) \le r (q_\vee (A ^2  B ^2 )).
\label{another}
\ee
\label{powerineq}
\end{theorem}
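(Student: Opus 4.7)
The plan is to reduce each of the three inequalities to the corresponding statement in Corollary \ref{Hadamard} by means of the maxpolynomial spectral mapping identity $r(q_\vee(T))=q_\vee(r(T))$ from Corollary \ref{sp_equality}. The bridge between the two sides is the elementary identity
\[
q_\vee(t^{1/m})=\bigl(q_\vee^{[m]}(t)\bigr)^{1/m}\qquad(t\ge 0,\; m\in\NN),
\]
which holds because both sides equal $\max_j \alpha_j t^{j/m}$. I also need that $q_\vee$ is non-decreasing on $[0,\infty)$, which is immediate since each monomial $\alpha_j t^j$ with $\alpha_j\ge 0$ is non-decreasing, so the pointwise maximum is too.

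First I would verify that the class $\mathcal{C}$ is closed under the relevant operations, so that Corollary \ref{sp_equality} applies to each operator appearing in the statement. In Example \ref{max_cont} the Hadamard product corresponds to multiplication of kernels and composition corresponds to max-convolution of kernels, both of which preserve continuity and non-negativity; in Example \ref{sup_bounded} both operations likewise preserve boundedness and non-negativity of the kernel. Hence $A_1\circ\cdots\circ A_m$, $P_1\circ\cdots\circ P_m$, $A_1\cdots A_m$, $A^2B^2$, and so on all lie in $\mathcal{C}$.

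For (\ref{Pji}), start from Corollary \ref{sp_equality} to rewrite $r(q_\vee(A_1\circ\cdots\circ A_m))=q_\vee(r(A_1\circ\cdots\circ A_m))$. Applying (\ref{products}) and the monotonicity of $q_\vee$ gives
\[
q_\vee(r(A_1\circ\cdots\circ A_m))\le q_\vee\bigl(r(P_1\circ\cdots\circ P_m)^{1/m}\bigr)\le q_\vee(r(A_1\cdots A_m)).
\]
Now invoke the key identity with $t=r(P_1\circ\cdots\circ P_m)$ to turn the middle term into $\bigl(q_\vee^{[m]}(r(P_1\circ\cdots\circ P_m))\bigr)^{1/m}$, and use Corollary \ref{sp_equality} once more to recognise this as $r(q_\vee^{[m]}(P_1\circ\cdots\circ P_m))^{1/m}$. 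A final application of Corollary \ref{sp_equality} on the right-hand side yields (\ref{Pji}). Inequality (\ref{m=2}) is then the special case $m=2$, $A_1=A$, $A_2=B$, so that $P_1=AB$ and $P_2=BA$. For (\ref{another}), applying Corollary \ref{sp_equality} to both sides and using the monotonicity of $q_\vee$ reduces the inequality to (\ref{kvmax}), namely $r(AB\circ BA)\le r(A^2B^2)$.

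No step is really difficult: the whole proof is an exercise in composing the maxpolynomial spectral mapping theorem with the Hadamard-product inequalities already established. The only mild subtlety is spotting the identity $q_\vee(t^{1/m})=q_\vee^{[m]}(t)^{1/m}$, which is precisely what forces the polynomial on the intermediate term to be $q_\vee^{[m]}$ rather than $q_\vee$ and justifies the outer exponent $1/m$ in (\ref{Pji}) and $1/2$ in (\ref{m=2}).
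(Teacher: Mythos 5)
Your proposal is correct and follows essentially the same route as the paper: the key identity $q_\vee(t^{1/m})=\bigl(q_\vee^{[m]}(t)\bigr)^{1/m}$, the spectral mapping equality $r(q_\vee(T))=q_\vee(r(T))$ from Corollary \ref{sp_equality}, and the Hadamard inequalities of Corollary \ref{max_ineq} are exactly the ingredients the authors combine, in the same order. Your added remark on closure of $\mathcal{C}$ under the operations is a harmless (and reasonable) elaboration of something the paper leaves implicit.
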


\begin{proof}

Since $q_{\vee}(\sqrt[m]{t}) = \sqrt[m]{q^{[m]} _{\vee} (t)}$ for $t \ge 0$, it follows from (\ref{eq_r_q}) and (\ref{products}) 
 that
$$r (q_\vee (A _1  \circ \cdots \circ A _m))= q_\vee (r (A _1  \circ \cdots \circ A _m)) $$
$$\le q_\vee \left(r (P_1  \circ \cdots \circ P_m)^{1/m} \right)= q_\vee ^{[m]}(r (P_1  \circ \cdots \circ P_m))^{1/m}$$
$$=  r (q_\vee ^{[m]}(P_1  \circ \cdots \circ P_m))^{1/m} \le  q_\vee ^{[m]}(r  (A_1   \cdots  A_m) ^m)^{1/m} $$
$$= q_\vee ( r (A_1   \cdots  A_m))= r (q_\vee (A_1   \cdots  A_m)),$$
which proves  (\ref{Pji}).

Inequality (\ref{m=2}) 
is a special case of 
(\ref{Pji}) 
and inequality (\ref{another}) 
is proved in a similar way as  (\ref{Pji}).

\end{proof}

\section*{Acknowledgments} The first author was supported by grants  GA CR 17-00941S and by RVO 67985840.

The second author acknowledges a partial support of  the Slovenian Research Agency (grants P1-0222 and J1-8133).


\medskip
\medskip

\end{document}